\tikzstyle{mybox} = [draw=black,   
\tikzset{middlearrow/.style={
        decoration={markings,
            mark= at position 0.5 with {\arrow{#1}} ,
        },
        postaction={decorate}
    }
}
\newcommand{\tikzAngleOfLine}{\tikz@AngleOfLine}
  \def\tikz@AngleOfLine(#1)(#2)#3{%
  \pgfmathanglebetweenpoints{%
    \pgfpointanchor{#1}{center}}{%
    \pgfpointanchor{#2}{center}}
  \pgfmathsetmacro{#3}{\pgfmathresult}%
  }
    \newtheorem{prop}{Proposition}
  \numberwithin{equation}{section}
 \numberwithin{prop}{section}
\newcommand{\Beq}{\begin{equation}}
\newcommand{\Eeq}{\end{equation}}
\newcommand{\Beqr}{\begin{eqnarray}}
\newcommand{\Eeqr}{\end{eqnarray}}
\newcommand{\mbt}{{\mathbb  T}}
\newcommand{{\mbbx}}{{\mathbb  X}}
\newcommand{{\mbx}}{{\mathbf  X}}
\newcommand{\mbr}{{\mathbb R}}
\newcommand{\mca}{{\mathcal A }}
\newcommand{\mci}{{\mathcal I }}
\newcommand{\ovGb}{  {\overline{\mathbf G}}             }
\newcommand{\ovg}{  {\overline g}}  
\newcommand{\ovG}{  {\overline G}}      
\newcommand{\ovGt}{  {\overline {\mathbf G}}_{\tau}}     
\newcommand{\ovgam}{  {\overline \gamma}}
\newcommand{\ovl}{\overline }
\newcommand{\ovph}{  {\overline \phi}}  
\newcommand{\ovt}{{\overline  \theta}}
\newcommand{{\wtlg}}{\widetilde\gamma }
\newcommand{{\wtlG}}{\widetilde\Gamma }
\newcommand{{\wtlv}}{\widetilde v }
\newcommand{{\tlg}}{\tilde\gamma }
\newcommand{{\tlG}}{\tilde\Gamma }
\newcommand{\mbbc}{{\mathbf C}}
\newcommand{\mbbb}{\mathbf {B} }
\newcommand{\mbg}{\mathbf {G} }
\newcommand{\mbh}{\mathbf {H} }
\newcommand{\mbp}{\mathbf {P} }
\newcommand{\mbu}{\mathbf {U} }
\newcommand{\Obj}{{\rm Obj }}
\newcommand{\Mor}{{\rm Mor }}
\begin{document}
\title{Construction of Categorical Bundles from Local Data}

\author{Saikat Chatterjee }
\address{Saikat Chatterjee , School of Mathematics, Indian Institute of Science Education and Research\\
CET Campus\\ Thiruvananthapuram, Kerala-695016\\
India}
\email{saikat.chat01@gmail.com}

\author{Amitabha Lahiri}
\address{Amitabha Lahiri, S.~N.~Bose National Centre for Basic Sciences \\ Block JD,
  Sector III, Salt Lake, Kolkata 700098 \\
  West Bengal, India}
  \email{amitabhalahiri@gmail.com}

\author{Ambar N. Sengupta }
\address{Ambar N. Sengupta, Department of Mathematics\\
  Louisiana State University\\  Baton
Rouge, Louisiana 70803, USA}
\email{ambarnsg@gmail.com}

\keywords{Categorical Groups; 2-Groups; Categorical geometry; Principal bundles}
\subjclass[2010]{Primary: 18D05; Secondary: 20C99}
% NOTE: that \CR here provides a vertical listing

% NOTE: thirdly, author macros  BEGIN here
%       (they are all actually used in the article!!)
% *PLEASE* note the begin and end of author macros in the source file

\def\xypic{\hbox{\rm\Xy-pic}}

\begin{abstract}
A categorical principal bundle is a structure comprised of categories that is analogous to a classical principal bundle; examples arise from geometric contexts involving bundles over path spaces. We show how a categorical principal bundle can be constructed from local data specified through transition functors and natural transformations.
\end{abstract}

%%%%%%%%%%%%%%%%%%%%%%%%%%%%%%%%%%%%%%%%%%%%%%%%%%%%%%%%%%%%%%%%%%%%%%%%%%%%%%%%%%%%%%%%%%%%%%%%%%%%%%%%%

\maketitle

\section{Introduction}\label{s:int}

A categorical principal bundle $\pi:\mbp\to\mbbb$ is a structure analogous to a classical principal bundle, but with all the spaces involved replaced by categories and maps by functors. Of interest to us is the case where these categories have a geometric significance; for example, there is an underlying classical principal bundle $\pi:P\to B$ and the objects of the `base category'  $\mbbb$ are the  points of   $B$ while the morphisms arise from paths on  $B$. In the  `bundle category' $\mbp$ the objects are the points of    $P$ and morphisms are of the form $(\ovgam, h)$, where $\ovgam$ comes from a path on $P$ that is horizontal with respect to a connection form on $\pi:P\to B$ and $h$ is a `decoration' drawn from a Lie group $H$. There are different notions of local triviality for such structures. In this paper we we construct a categorical bundle from local data. The local data do not come as a traditional cocycle of transition functions but rather as functors that fall short of a cocycle relation. We describe a quotient procedure that leads to   ``functorial cocycles''  and then construct a categorical bundle from such cocycles.

The literature in category theoretic geometry has grown rapidly in recent years.  We mention here the works of  Abbaspour and Wagemann [1],  Attal [3, 4], Baez et al.  [5, 6], Barrett [7], Bartels [8], Breen and Messing [9], Parzygnat [20], Picken et al. [10, 17, 18], Soncini and Zucchini [23], Schreiber and Waldorf [24, 25], Viennot [26] and Wang [27, 28].  Among others, the works [sec. 3, 1]; [Propn 2.2, 5]; [sec. 2, 6] study the relationship between gerbe local data and 2-bundles, as well as connections on such structures. Our framework and structures are closely related in spirit but there are differences. In particular we have two categorical groups $\mbg$ and $\mbh$ in terms of which the local data are specified. The technical nature of what a categorical bundle is also specified differently in our approach.

\subsection{Results and organization} We begin in section \ref{s:bn} with a summary of essential notions and notation concerning categorical groups, categorical bundles, and categories arising from points and paths on manifolds.  All through this paper the base space of the bundle is a manifold $B$, and local data is specified relative to an open covering $\{U_i\}_{i\in \mci}$ of $B$.  Associated to these sets are categories $\mbu_i$ (objects are points of $U_i$ and morphisms arise from paths on $B$ that lie inside $U_i$) and overlap categories such as $\mbu_{ik}=\mbu_i\cap\mbu_k$.  

In section \ref{s:cocy} we work with two categorical groups $\mbg$ and $\mbh$,  and introduce {\em gerbal cocycles} (subsection \ref{ss:gerbcoc}), which are analogous to classical cocycles except that they fall short of satisfying the exact identities needed for classical cocycles. Then  we construct {\em functorial} forms of these gerbal cocycles; briefly put  they are given by functors
\begin{equation}\label{E:theik1}
\theta_{ik}:\mbu_{ik}\to \mbg
\end{equation}
and natural isomorphisms
\begin{equation}\label{E:Theik1}
\mbt_{ikm}:\theta_{ik}\theta_{km}\to\theta_{im}.
\end{equation}  
This is explained in subsection \ref{ss:confunct}. We establish several properties of the natural transformations (\ref{E:theik1}).

In order to obtain genuine cocycles we take quotients to form a category $\ovGb$   and a categorical group $\ovGb_{\tau}$ (these are constructed in subsections  \ref{ss:qG}  and \ref{ss:qGt}). 

 We turn next in section \ref{s:cbcld} to the construction of a globally defined categorical bundle over $\mbbb$ (points forming the base manifold $B$ and morphisms arising from paths on $B$). The classical construction of a principal bundle $\pi:X\to B$ from a cocycle of transition functions may be viewed as the construction of a projective limit from a family of trivial bundles
$$U_i\times G\to U_i.$$  
Using this viewpoint we construct in section \ref{s:cbcld} a categorical principal bundle
$$\mbx\to\mbbb,$$
by sewing together the  trivial categorical bundles $\mbu_i\times\ovGb_{\tau}$ using the functorial transition data in (\ref{E:theik1}) and (\ref{E:Theik1}).  Here we use the formalism of {\em quivers}, explained in subsection \ref{ss:quiv}.
 
Using  techniques similar to those described in this paper it is possible to construct a categorical principal $\mathbf G$-bundle (instead of ${\bar {\mathbf G}}_{\tau}$-bundle), which enjoys a {\em weak} local trivialization property, where  local triviality is understood in terms of equivalences rather than isomorphisms. For this it suffices that the target maps
$\tau':J\to H$ and $\tau:H\to G$   satisfy the condition that $ {\rm Ker}(\tau)\cap{\rm Im}(\tauÕ)$  is trivial, where the notation is as explained at the beginning of section \ref{s:cocy}. We will not explore this line of investigation in the present paper.

\section{Basic notions}\label{s:bn}

In this section we summarize the essentials of terminology and notions that we use. The categories we work with are all small categories, the objects and morphisms forming sets. In fact the object sets of the categories we work with are smooth manifolds and functors are, at the level of objects, given by smooth functions.

By a {\em categorical group} ${\mbg}$ we mean a small category  along with a functor
$$\mbg\times\mbg\to\mbg$$
that makes both the object set $\Obj(\mbg)$ and the morphism set $\Mor(\mbg)$ groups. The source and target maps
$$s, t:\Mor(\mbg)\to\Obj(\mbg)$$
are homomorphisms.  We say that the categorical group ${\mbg}$ is a categorical Lie group if $\Obj(\mbg)$ and $\Mor(\mbg)$ are Lie groups and $s$ and $t$ are smooth mappings. Associated to a categorical group $\mbg$ is a {\em crossed module} $(G, H,\alpha,\tau)$, where $G$ and $H$ are groups, and
\begin{equation}\begin{split}
\tau:H\to G & \qquad \hbox{and}\qquad \alpha: G\to {\rm Aut}(H):g\mapsto\alpha_g\end{split}\end{equation}
are homomorphisms satisfying the 
Peiffer identities 
\begin{equation}\label{E:Peiffer}
\begin{split}
\tau\bigl(\alpha_g(h)\bigr) &= g   \tau(h)  g^{-1}\\
\alpha_{\tau(h)}(h') &=hh'h^{-1}
\end{split}
\end{equation}
for all $g\in G$ and $h\in H$. The relationship between ${\mbg}$ and the crossed module is given by
$$G=\Obj(\mbg) \quad\hbox{and}\quad H=\ker s\subset\Mor(\mbg).$$
The morphism group $\Mor(\mbg)$ can be identified with the semidirect product $H\rtimes_{\alpha}G$:
$$\Mor(\mbg)\simeq H\rtimes_{\alpha}G,$$
with  $(h,g)\in H\rtimes_{\alpha}G$ having source $g$ and target $\tau(h)g$:
\begin{equation}\label{E:HGalptaust}
s(h,g)=g\qquad\hbox{and}\qquad t(h,g)=\tau(h)g.
\end{equation}
Composition of morphisms is given in $H\rtimes_{\alpha}G$ by
\begin{equation}\label{E:morcompHG}
(h_2,g_2)\circ (h_1, g_1)=(h_2h_1, g_1),
\end{equation}
in contrast to the product operation in $\Mor(\mbg)$ which is given by the semidirect product operation
\begin{equation}\label{E:semidprod}
(h_2,g_2)(h_1,g_1)= \bigl(h_2\alpha_{g_2}(h_1), g_2g_1\bigr).
\end{equation}
The categorical group $\mbg$ is a Lie group if and only if $G$ and $H$ are Lie groups and the mappings $\tau:h\mapsto\tau(h)$ and $(h,g)\mapsto\alpha_g(h)$ are smooth. 

It will often, but not always, be convenient to identify $H$ and $G$ with the subgroups $H\times\{e\}$ and $\{e\}\times G$ in $H\rtimes_{\alpha}G$, so that $(h,g)$ can be written simply as a product:
\begin{equation}\label{E:prodhg}
hg=(h,g).
\end{equation}
 As a  consequence of the first Peiffer identity  the image $\tau(H)$ is a normal subgroup of $G$:
\begin{equation}\label{E:normaltau}
\hbox{$g\tau(h) g^{-1}= \tau\bigl(\alpha_g(h)\bigr)\in\tau(H)$
   for all $h\in H$ and $g\in G$.}
   \end{equation}

   By a {\em categorical principal bundle} with {\em structure categorical group} $\mbg$ we mean a functor 
   $$\pi: \mbp \to\mbbb $$
   that is surjective  both on the level of objects and on the level of morphisms, along a functor
      $$\mbp\times\mbg\to\mbp$$
      that is a free right action   both on objects and on morphisms, such that $\pi(pg)=\pi(p)$ for all objects/morphisms $p$ of $\mbp$ and all objects/morphisms $g$ of $\mbg$. (For more on categorical principal bundles we refer to [12].) This is a `bare bones' definition; in practice we are only concerned with those examples in which  $\mbg$ is a categorical Lie group, $\Obj(\mbp)$ and $\Obj(\mbbb)$ are smooth manifolds,  and the object bundle
      $$\Obj(\mbp)\to\Obj(\mbbb)$$
      is a principal $G$-bundle, where $G=\Obj(\mbg)$. The morphisms of $\mbbb$ arise from paths on $B=\Obj(\mbbb)$, as we now discuss.

       \subsection{Categories from points and paths}\label{ss:cpp}
  
  We turn now to categories of points and paths.
  Associated to a smooth manifold $M$ there is a category ${\mathbf M}$ whose   objects are the points of $M$, and whose  morphisms are all piecewise smooth paths on $M$. Let us specify this in more detail. The paths we use are smooth mappings of the form $[a,b]\to M$, where $a,b\in\mbr$ with $a<b$, and the paths are assumed to be constant near the initial time $a$ and  the terminal time $b$. Paths $\gamma_1:[a,b]\to M$ and $\gamma_2:[c,d]\to M$ are identified if there is a constant $r$ such that $[c,d]=[a,b]+r$ and 
\begin{equation}\label{E:gam2gam1}
\gamma_2(t) =\gamma_1(t-r)\qquad\hbox{for all $t\in [c,d]$.}
\end{equation}
  The source of $\gamma$ is the initial point and the target of $\gamma$ is the terminating point; often we will find it notationally convenient to write $\gamma_0$ or even $\gamma(0)$ to denote the source $s(\gamma)$, and $\gamma_1$ or $\gamma(1)$ to denote the target $t(\gamma)$.
Composition of morphisms is defined by composition of paths; the requirement that paths be constant near their initial and terminal times ensures that the composition of two such paths is smooth and has the same property.

\begin{figure}[h]

 \tdplotsetmaincoords{70}{110}
\begin{tikzpicture}[scale=.9,tdplot_main_coords]
   
    \def\x{.5}
   
    \filldraw[
        draw=purple,%
        fill=purple!20,%
    ]          (-1,8,0)
            -- (7,8,0)--(7,17,0)
 -- (-1,17,0)--(-1,8,0);

   % \fill [blue] (6,12,4) circle[ radius=3pt] node[anchor=south] { };

                \fill [blue] (6,12,1) circle[radius=3pt] node[anchor=east] {$\gamma_0=\gamma(a)$};
                
                 \fill [blue] (6,16,1) circle[radius=3pt] node[anchor=west] {$\gamma_1=\gamma(b)$};

  \coordinate [label=above: {$\gamma$}] (gam) at (5,14,1.3);

                \coordinate [ label=above: {$M$}] (B) at (6,14,0);

                   \draw[middlearrow={>}, very thick, purple] (6,12,1) to [out=90,in=185]  (5,14,2) to [out=0, in =180] (6,16,1);
                   
                   \node [mybox, right=30,below=6] (box) at (5,22,3){%
    \begin{minipage}{0.35\textwidth}
 A morphism $\gamma_0\to\gamma_1$ of $\mathbf M$ arises from a path $\gamma:[a,b]\to M$
    \end{minipage}
};

\end{tikzpicture}

\caption{The category ${\mathbf M}$}
    \label{F:decbun}
    \end{figure}
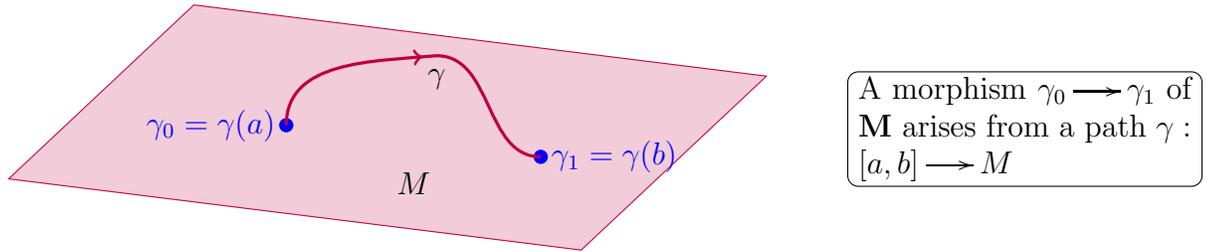

\subsection{Triviality and local triviality} There are different notions of triviality that are of interest for categorical bundles.  The product categorical bundle over a base category $\mbu$ and having structure categorical group $\mbg$  is given by the  projection functor
$$\mbu\times\mbg\to\mbu$$
and the obvious right action of $\mbg$ on $\mbu\times\mbg$. The simplest and strongest notion of triviality of a categorical principal $\mbg$-bundle $\mbp\to\mbu$  to require that there be an isomorphism of categories $\Phi: \mbp\to \mbu\times \mbg$, of appropriate smoothness, that respects the action of $\mbg$ as well as the projection functor. An alternate notion, which allows a richer geometric structure, is explored in  [15]. An even weaker notion is to require that $\Phi$ be an equivalence, rather than an isomorphism. Corresponding to these notions of global triviality there are notions of local triviality for categorical bundles.

\subsection{Covering subcategories}\label{ss:covsubc} We will find it convenient to work with subcategories of $\mbbb$ for which paths initiate and terminate in specified sets of the open covering $\{U_i\}_{i\in {\mathcal I}}$ of a manifold $B$. To this end let $\mbu_{i}$ be the category whose object set is
 $U_i$
and whose morphisms are the morphisms  $\gamma$ of $\mbbb$ that lie entirely inside $U_i$.  
  
  It is also useful to introduce  
the overlap category
\begin{equation}\label{D:objUijkl}
\mbu_{ik} =\mbu_i\cap\mbu_k,
\end{equation}
the object set and morphism sets being just the intersections of the corresponding sets for $\mbu_i$ and $\mbu_k$.  Of course, this overlap category is defined only if $U_i\cap U_k\neq\emptyset$. 
  Analogously, we also have triple overlap categories $\mbu_{ikm}$, if $U_j\cap U_k\cap U_m$ is nonempty, and, more generally, $\mbu_I$, for any finite subset $I\subset \mci$ for which the intersection
    \begin{equation}\label{E:UIdef}
    U_I\stackrel{\rm def}{=}\cap_{i\in I}U_i
    \end{equation}
    is nonempty. We denote by $S_{\mci}$ the set of all nonempty subsets $I$ of $\mci$:
    \begin{equation}\label{E:defSI}
    S_{\mci}=\{I\subset \mci\,:\, U_I\neq\emptyset\}.
    \end{equation}
    If $I, J\in S_{\mci}$ with $I\subset J$ then  $U_J\subset U_I$ and so we have a functor
    \begin{equation}
    \mbu_{J}\to\mbu_I
    \end{equation}
    induced by the inclusion $U_J\to U_I$.  Thus, if we denote by ${\mathbf S}_{\mci}$ the category whose object set is $S_{\mci}$ and whose morphisms $I\to  I$ are the inclusion maps $J\to I$, then
 \begin{equation}\label{E:ICMan}
 \mbu_{\cdot}: I\mapsto \mbu_I
     \end{equation}
    specifies a functor from the category ${\mathbf S}_{\mci}$ to the category ${\bf CMan}$ whose objects are categories ${\mathbf M}$ arising from manifolds and whose morphisms arise from smooth mappings between manifolds.   The functor $\mbu_{\cdot}$ is specified on morphisms in the obvious way: it carries the morphism $I\to J$ in ${\mathbf S}_{\mci}$ to the `inclusion' functor $\mbu_I\to\mbu_J$. 
    
    The set-theoretic union of the morphism sets $\Mor(\mbu_i)$ is not generally equal to $\Mor(\mbbb)$, and so the categories $\mbu_i$ do not `cover' $\mbbb$ in any literal sense. However, we do have the functors
    $${\rm inc}_I: \mbu_I\to \mbbb$$
    induced by the inclusion maps $U_I\to B$.  If a morphism $\gamma_i$ in $U_i$ and a morphism $\gamma_j$ in $U_j$ happen to arise from the same path, lying inside $U_i\cap U_j$, then  there is a morphism $\gamma_{ij}\in\Mor(\mbu_{ij})$ (arising from the same path again, but viewed as lying in $U_i\cap U_j$), that is carried to $\gamma_i\in\Mor(\mbu_i)$ and to $\gamma_j\in\Mor(\mbu_j)$ by the `inclusion' functors $\mbu_{\{i,j\}}\to\mbu_i$ and  $\mbu_{\{i,j\}}\to\mbu_j$. Then $\mbbb$ along with the `inclusion' functors $\mbu_I\to\mbbb$ is a co-limit for the functor $\mbu_{\cdot}$ given in (\ref{E:ICMan}). 
    
    We present this formalism mainly to motivate the thinking behind the method we use later in section \ref{s:cbcld} to construct a global categorical bundle from local trivial ones.

\section{Cocycles: from gerbal to functorial}\label{s:cocy}

In this section we construct a cocycle with values in a categorical group, starting with some group-valued locally-defined functions that need not  form a cocycle.

All through this paper we will work with a categorical Lie group  $\mbg$, associated with a Lie crossed module  $(G, H, \alpha, \tau)$, and a categorical Lie group $\mbh$, with associated  Lie crossed module  $(H, J, \alpha',\tau')$.

\subsection{Gerbal cocycles}\label{ss:gerbcoc}

We work with a manifold $B$,  and  an open covering
$$\{U_i\}_{i\in {\mathcal I}}.$$
By a {\em gerbal cocycle} associated with this covering and the Lie crossed module $(H, J,\alpha',\tau')$ we mean a collection of smooth functions
  \begin{equation}\label{E:hjgerb}
  h_{ik}:U_i\cap U_k\to H\qquad\hbox{and}\qquad j_{ikm}: U_i\cap U_k\cap U_m\to J,
  \end{equation}
  with $h_{ik}$ defined when $U_i\cap U_k\neq\emptyset$ and $j_{ikm}$ defined when $U_i\cap U_k\cap U_m\neq\emptyset$, such that
  \begin{equation}\label{E:hilikl}
  h_{im}(u) =\tau'\bigl(j_{ikm}(u)\bigr)h_{ik}(u)h_{km}(u)\qquad\hbox{for all $u\in U_{i}\cap U_k\cap U_m$.}
  \end{equation}
  The pattern here is that on the right the effect of $\tau'\bigl(j_{ikm}(u)\bigr)$ is to `combine' the subscripts $ik$ and $km$ into $im$. 
  
   We will use the notation $U_{ik}$ and $U_{ikm}$ for intersections:
  \begin{equation}\label{E:UikUikm}
  U_{ik}=U_i\cap U_k\qquad\hbox{and}\qquad U_{ikm}=U_i\cap U_k\cap U_m.
  \end{equation}

 In terms of the categorical group $\mbh$ we have a morphism
 \begin{equation}\label{E:gerbcocychj}
 \psi_{ikm}(u): h_{ik}(u)h_{km}(u)\to h_{im}(u)\qquad\hbox{for all $u\in U_{ikm}$,}
 \end{equation}
 where 
  $\psi_{ikm}(u)\in\Mor(\mbh)\simeq J\rtimes_{\alpha'}H$ is given by
  $$\psi_{ikm}(u) =\bigl(j_{ikm}(u), h_{ik}(u)h_{km}(u)\bigr).$$

  Now let
  \begin{equation}\label{E:defgikhijk}
  \begin{split}
   {g}_{ik}  &=\tau(h_{ik}):U_{ik}\to G\qquad\hbox{ if $U_{ik}\neq\emptyset$;} \\
  h_{ikm}&=\tau'(j_{ikm}): U_{ikm}\to H\qquad\hbox{ if $U_{ikm}\neq\emptyset$.} 
  \end{split}
  \end{equation}
  Then by (\ref{E:hilikl}) we have
   \begin{equation}\label{E:hijkgijik2}
  \begin{split}
  h_{im}(u) &=h_{ikm}(u)h_{ik}(u)h_{km}(u)\\
  {g}_{im}(u)&=\tau\bigl(h_{ikm}(u)\bigr){g}_{ik}(u){g}_{km}(u)
  \end{split}
  \end{equation}
  for all $u\in U_{ikm}$. Thus the system of functions $\{g_{ik}\}$ and $\{h_{ikm}\}$ is   a gerbal cocycle associated with the covering $\{U_i\}_{i\in {\mathcal I}}$ and the Lie crossed module $(G, H,\alpha,\tau)$.
  
  There are different notational conventions in specifying gerbe data. Ours is consistent with [Definition 7, 1]. 
  
  \subsection{A second gerbe relation} The conditions (\ref{E:hijkgijik2}) imply another relation satisfied by the $h_{ik}$ and $h_{ikm}$ (suppressing the point $u$ for notational ease):
  \begin{equation}\label{E:triplegerbe}\begin{split}
  h_{ijm} \alpha_{g_{ij}}(h_{jkm}) &= h_{ijm} h_{ij}h_{jkm}h_{ij}^{-1}\\& \quad\hbox{(by the second Peiffer identity (\ref{E:Peiffer}))}\\
  &= h_{im}h_{jm}^{-1}h_{ij}^{-1}\cdot h_{ij}\cdot h_{jm}h_{km}^{-1}h_{jk}^{-1}\cdot h_{ij}^{-1}\\
  &\quad\hbox{(using the first relation in (\ref{E:hijkgijik2}))}\\
  &=h_{im}  h_{km}^{-1}h_{jk}^{-1}h_{ij}^{-1}\\
  &=h_{ikm} h_{ik}h_{km} \cdot h_{km}^{-1}h_{jk}^{-1}h_{ij}^{-1}\quad\hbox{(again by  (\ref{E:hijkgijik2}))}\\
  &=h_{ikm} h_{ik}h_{jk}^{-1}h_{ij}^{-1}.  \end{split}
  \end{equation}
  Using the first relation in (\ref{E:hijkgijik2}) once again, we obtain
  \begin{equation}\label{E:gerbe2}
   h_{ijm} \alpha_{g_{ij}}(h_{jkm})=h_{ikm} h_{ijk}.
  \end{equation}
  This along with the second relation in (\ref{E:hijkgijik2}) ensure that the data $\{g_{ij}\}$ and $\{h_{ijk}\}$ are the local data for a gerbe structure  [Definition 7, 1].

 \subsection{Construction of functorial cocycles}\label{ss:confunct} We continue to work with an open covering $\{U_i\}_{i\in\mci}$ of $B$, and the corresponding path categories $\mbu_i$ and overlap categories $\mbu_{ik}$ as discussed in subsection \ref{ss:cpp}.

Starting with a given gerbal cocycle as in (\ref{E:hjgerb}) let us define
\begin{equation}\label{D:tijkl}
\theta_{ik} :{\mbu}_{ik} \to\mbg
\end{equation}
on objects by
\begin{equation}\label{E:tikjlobj}
\theta_{ik}(u) = g_{ik}(u)\stackrel{\rm def}{=}\tau\bigl(h_{ik}(u)\bigr)
\end{equation}
for all $u\in U_{ik}$,  assumed nonempty, and on morphisms  by
\begin{equation}\label{E:tikjlmor}
\begin{split}
\Mor(\mbu_{ik}) &\to \Mor(\mbg)\\
\theta_{ik}(\gamma) = \Bigl(h_{ik}(\gamma), g_{ik}(\gamma_0) \Bigr): g_{ik}(\gamma_0)&\to \tau\bigl(h_{ik}(\gamma)\bigr)g_{ik}(\gamma_0), 
\end{split}
\end{equation}
where $\gamma_0=s(\gamma)$  and
\begin{equation}\label{E:hikgam}
h_{ik}(\gamma) =h_{ik}(\gamma_1)h_{ik}(\gamma_0)^{-1}.\end{equation}
Let us note that in (\ref{E:tikjlmor}) the target of the morphism $\theta_{ik}(\gamma)$ is $g_{ik}(\gamma_1)$:
\begin{equation}\label{E:tartheik}
t\bigl(\theta_{ik}(\gamma)\bigr)=g_{ik}(\gamma_0).
\end{equation} 
In this subsection we show that $\theta_{ik}$ is a functor and  work out some of the signifcant properties of this system of functors.

Let us verify functoriality of $\theta_{ik}$. If $\gamma$ and $\gamma'$ are morphisms in $\mbu_{ik}$ and $\gamma'_0=\gamma_1$ then
\begin{equation}\label{E:gammacomp}
\begin{split}
\theta_{ik}(\gamma'\circ\gamma) &=\bigl(h_{ik}(\gamma'), g_{ik}(\gamma'_0)\bigr)\circ \bigl(h_{ik}(\gamma), g_{ik}(\gamma_0)\bigr)\\
&= \bigl(h_{ik}(\gamma')h_{ik}(\gamma), g_{ik}(\gamma_0)\bigr)\\
&= \bigl(h_{ik}(\gamma'_1)h_{ik}(\gamma_0)^{-1}, g_{ik}(\gamma_0)\bigr)\\
&= \bigl(h_{ik}(\gamma'\circ\gamma), g_{ik}(\gamma_0)\bigr)\\
&=\theta_{ik}(\gamma')\circ \theta_{ik}(\gamma)
\end{split}
\end{equation}
furthermore, $\theta_{ik}$ clearly maps any identity morphism $i_u:u\to u$   in $\mbu_{ik}$ to the  identity morphism 
$$(e, g_{ik}(u)\bigr):g_{ik}(u)\to g_{ik}(u)$$  
in $\Mor(\mbg)$.

If the triple overlap category $\mbu_{ikm}$ is defined then we can restrict the transition functor to obtain a functor
 \begin{equation}\label{E:trest} \theta_{{i\,k\, |m|}}=\theta_{ik}|{\mbu}_{ikm} :{\mbu}_{ikm}  \to\mbg.
\end{equation}
To minimize notational clutter we will drop the subscript $|m|$ and write $\theta_{ik}$ for the restricted functor as well, leaving it to the context to make the intended meaning clear.

Let us now verify that on  $\mbu_{ikm}$  there is a natural transformation
 \begin{equation}\label{E:hjlntt}
 {\mathbb T}_{ikm}: \theta_{ik}\theta_{km}\Rightarrow\theta_{im}
 \end{equation}
 given on any object $u\in U_{ikm}$ by 
 \begin{equation}\label{E:htild}
 \begin{split}
 {\mathbb T}_{ikm}(u) &= \Bigl(h_{ikm}(u), \,{g}_{ik}(u){g}_{km}(u)\Bigr)\in H\rtimes_{\alpha}G\simeq \Mor(\mbg),
 \end{split}
 \end{equation}
 where $h_{ikm}$ is as given in (\ref{E:defgikhijk}). 
 The source of this morphism is the product ${g}_{ik}(u){g}_{km}(u)$ and the target is
 $$t\bigl( {\mathbb T}_{ikm}(u)\bigr) =\tau\bigl(h_{ikm}(u)\bigr){g}_{ik}(u){g}_{km}(u)={g}_{im}(u)=\theta_{im}(u)$$
 by the second equation of the  gerbal relations (\ref{E:hijkgijik2}).
 
 \begin{prop}\label{E:nattransTikm} With notation as above, $\mbt_{ikm}$ is a natural transformation. \end{prop}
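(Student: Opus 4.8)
The plan is to verify directly that $\mbt_{ikm}$ satisfies the naturality square. For a morphism $\gamma:\gamma_0\to\gamma_1$ in $\mbu_{ikm}$ this amounts to establishing the identity
\[
\mbt_{ikm}(\gamma_1)\circ(\theta_{ik}\theta_{km})(\gamma)=\theta_{im}(\gamma)\circ\mbt_{ikm}(\gamma_0).
\]
First I would unwind the product functor $\theta_{ik}\theta_{km}$. Since $\mbg$ is a categorical group, this product is formed pointwise via the group-multiplication functor $\mbg\times\mbg\to\mbg$; on the morphism $\gamma$ it is the semidirect-product multiplication (\ref{E:semidprod}) of $\theta_{ik}(\gamma)$ and $\theta_{km}(\gamma)$, giving
\[
(\theta_{ik}\theta_{km})(\gamma)=\bigl(h_{ik}(\gamma)\,\alpha_{g_{ik}(\gamma_0)}(h_{km}(\gamma)),\,g_{ik}(\gamma_0)g_{km}(\gamma_0)\bigr).
\]

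Next I would confirm that both composites are actually defined, i.e.\ that sources and targets match. Using $\tau\bigl(h_{ik}(\gamma)\bigr)=g_{ik}(\gamma_1)g_{ik}(\gamma_0)^{-1}$ together with the first Peiffer identity (\ref{E:Peiffer}), a short computation shows the target of $(\theta_{ik}\theta_{km})(\gamma)$ is $g_{ik}(\gamma_1)g_{km}(\gamma_1)$, which is the source of $\mbt_{ikm}(\gamma_1)$; and the target of $\mbt_{ikm}(\gamma_0)$ is $g_{im}(\gamma_0)$ by the second relation in (\ref{E:hijkgijik2}), matching the source of $\theta_{im}(\gamma)$. Applying the composition rule (\ref{E:morcompHG}) to both sides, the $G$-components agree automatically (both equal $g_{ik}(\gamma_0)g_{km}(\gamma_0)$), so the whole square reduces to a single identity in $H$:
\[
h_{ikm}(\gamma_1)\,h_{ik}(\gamma)\,\alpha_{g_{ik}(\gamma_0)}(h_{km}(\gamma))=h_{im}(\gamma)\,h_{ikm}(\gamma_0).
\]

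The key step---and where the crossed-module structure does the essential work---is to rewrite the $\alpha$-action as a conjugation. Since $g_{ik}(\gamma_0)=\tau\bigl(h_{ik}(\gamma_0)\bigr)$ by (\ref{E:tikjlobj}), the second Peiffer identity (\ref{E:Peiffer}) yields $\alpha_{g_{ik}(\gamma_0)}(\cdot)=h_{ik}(\gamma_0)\,(\cdot)\,h_{ik}(\gamma_0)^{-1}$. Expanding $h_{ik}(\gamma)=h_{ik}(\gamma_1)h_{ik}(\gamma_0)^{-1}$ and $h_{km}(\gamma)=h_{km}(\gamma_1)h_{km}(\gamma_0)^{-1}$ from (\ref{E:hikgam}), the left-hand side telescopes: the conjugating factor $h_{ik}(\gamma_0)$ cancels the trailing $h_{ik}(\gamma_0)^{-1}$ inside $h_{ik}(\gamma)$, leaving $h_{ikm}(\gamma_1)h_{ik}(\gamma_1)h_{km}(\gamma_1)\cdot h_{km}(\gamma_0)^{-1}h_{ik}(\gamma_0)^{-1}$, and the first relation of (\ref{E:hijkgijik2}) at $\gamma_1$ collapses the leading three factors to $h_{im}(\gamma_1)$. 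A parallel expansion of the right-hand side, using the same gerbal relation at $\gamma_0$ to rewrite $h_{im}(\gamma_0)^{-1}$, produces the identical expression $h_{im}(\gamma_1)h_{km}(\gamma_0)^{-1}h_{ik}(\gamma_0)^{-1}$, completing the verification. I expect the only genuine subtlety to be the noncommutative bookkeeping in these cancellations; the conceptual content lies entirely in recognizing that the two Peiffer identities convert the semidirect-product and composition laws into conjugations that interact cleanly with the gerbal relation (\ref{E:hijkgijik2}).
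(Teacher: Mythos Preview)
Your proposal is correct and follows essentially the same route as the paper: compute the product $\theta_{ik}(\gamma)\theta_{km}(\gamma)$ via the semidirect-product formula, compose on each side using (\ref{E:morcompHG}), invoke the second Peiffer identity to replace $\alpha_{g_{ik}(\gamma_0)}$ by conjugation with $h_{ik}(\gamma_0)$, telescope, and finish with the gerbal relation (\ref{E:hijkgijik2}) at the two endpoints. Your organization---reducing first to a single $H$-identity before expanding---is slightly cleaner than the paper's, but the computation is the same.
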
 
 Thus $\{\theta_{ik}\}$, along with $\{{\mathbb T}_{ikm}\}$,  is a {\em functorial cocycle}, by which we mean a system of functors $\theta_{ik}:\mbu_{ik}\to\mbg$ satisying the relation
 \begin{equation}\label{E:hjlntt2}
 {\mathbb T}_{ikm}: \theta_{ik}\theta_{km}\Rightarrow\theta_{im},
 \end{equation}
 where ${\mathbb T}_{ikm}$ are natural transformations. 
 \begin{proof} The main task is to verify that for any morphism $\gamma:u\to v$ in $ \mbu_{ikm}$ the diagram

\begin{equation} \label{E:Tnatural}
\xymatrix{
         \ar[d]_{ \theta_{ik}(\gamma)\theta_{km}(\gamma) }  \theta_{ik}(u)\theta_{km}(u)       \ar[r]^-{\mbt_{ikm}(u)} &  \theta_{im}(u) \ar[d]^{\theta_{im}(\gamma)} \\
\theta_{ik}(v)\theta_{km}(v)  \ar[r]_-{\mbt_{ikm}(v)}& \theta_{im}(v) 
}
\end{equation}
commutes.  To this end let us first  compute  $ \mbt_{ikm}(v)\circ \bigl(\theta_{ik}(\gamma)\theta_{km}(\gamma)\bigr)$. 

Let us recall the way composition works in $\Mor(\mbg)\simeq H\rtimes_{\alpha}G$:
\begin{equation}
(h',g')\circ (h,g) =(hh', g)\qquad\hbox{if $\tau(h)g=g'$,}
\end{equation}
bearing in mind that $ t(h,g)=\tau(h)g$ and $s(h',g')=g'$. Multiplication is given by
\begin{equation}
(h_2,g_2)(h_1,g_1)=\bigl(h_2g_2h_1g_2^{-1}, g_2g_1\bigr).
\end{equation} 
Thus, recalling $\theta_{ik}(\gamma)$ from (\ref{E:tikjlmor}), we have
\begin{equation}
\theta_{ik}(\gamma)\theta_{km}(\gamma)=\bigl(h_{ik}(\gamma)g_{ik}(\gamma_0)h_{km}(\gamma)g_{ik}(\gamma_0)^{-1},\, g_{ik}(\gamma_0)g_{km}(\gamma_0)\bigr).
\end{equation}
and so
\begin{equation}
\begin{split}
&\mbt_{ikm}(v)\circ \bigl(\theta_{ik}(\gamma)\theta_{km}(\gamma)\bigr) \\
&=\bigl(h_{ikm}(v), g_{ik}(v)g_{km}(v)\bigr)\circ \bigl(h_{ik}(\gamma)g_{ik}(\gamma_0)h_{km}(\gamma)g_{ik}(\gamma_0)^{-1},  \\
&\hskip 3.5in   g_{ik}(\gamma_0)g_{km}(\gamma_0)\bigr)\\
&  =\bigl(h_{ikm}(v)h_{ik}(\gamma)g_{ik}(\gamma_0)h_{km}(\gamma)g_{ik}(\gamma_0)^{-1}, \, g_{ik}(\gamma_0)g_{km}(\gamma_0)\bigr).
\end{split}
\end{equation}
Focusing on the $H$-component we have:
\begin{equation}\label{E:hikmcomput}
\begin{split}
&
h_{ikm}(v)h_{ik}(\gamma)g_{ik}(\gamma_0)h_{km}(\gamma)g_{ik}(\gamma_0)^{-1} \\
&=h_{ikm}(v)h_{ik}(\gamma)h_{ik}(\gamma_0)h_{km}(\gamma)h_{ik}(\gamma_0)^{-1},
\end{split}
\end{equation}
upon using the second Peiffer identity (\ref{E:Peiffer}):
\begin{equation}
\tau(h)h_1\tau(h)^{-1}=hh_1h^{-1}\qquad\hbox{for all $h, h_1\in H$.}
\end{equation}
Continuing, we have
\begin{equation}\label{E:hikmcomput2}
\begin{split}
&h_{ikm}(v)h_{ik}(\gamma)g_{ik}(\gamma_0)h_{km}(\gamma)g_{ik}(\gamma_0)^{-1} \\
& = h_{ikm}(v)h_{ik}(\gamma_1)h_{ik}(\gamma_0)^{-1}\cdot h_{ik}(\gamma_0)\cdot h_{km}(\gamma_1)h_{km}(\gamma_0)^{-1}\cdot h_{ik}(\gamma_0)^{-1}\\
&= h_{ikm}(v)h_{ik}(\gamma_1)h_{km}(\gamma_1) h_{km}(\gamma_0)^{-1}  h_{ik}(\gamma_0)^{-1} \\
&=\Bigl( h_{ikm}(v)h_{ik}(v)h_{km}(v)\Bigr)\,\Bigl( h_{ikm}(u)h_{ik}(u)h_{km}(u)\Bigr)^{-1} h_{ikm}(u)\\
&= h_{im}(v)h_{im}(u)^{-1} h_{ikm}(u)
\end{split}
\end{equation}
where, in the last equality, we have used the first gerbal relation in (\ref{E:hijkgijik2}).   Thus
\begin{equation}\label{E:upperright}
\begin{split}
\mbt_{ikm}(v)\circ \bigl(\theta_{ik}(\gamma)\theta_{km}(\gamma)\bigr)&\\
&\hskip -1in =\bigl(h_{im}(v)h_{im}(u)^{-1} h_{ikm}(u), \, g_{ik}(\gamma_0)g_{km}(\gamma_0)\bigr).
\end{split}
\end{equation}

On the other hand,  $\theta_{im}(\gamma)\circ\mbt_{ikm}(u)$ is given by:
\begin{equation}\label{E:lowerright}
\begin{split}
\Theta_{im}(\gamma)\circ\mbt_{ikm}(u) &=\bigl(h_{im}(\gamma_1)h_{im}(\gamma_0)^{-1}h_{ikm}(u),\, g_{ik}(u)g_{km}(u)\bigr),
\end{split}
\end{equation}
on using the expression for $\mbt_{ikm}(u) $ from (\ref{E:htild}). 
Comparing with (\ref{E:upperright}) we conclude that the diagram (\ref{E:Tnatural}) commutes.
 \end{proof}
 Next let us make an observation about the product $\theta_{ik}(\gamma)\theta_{km}(\gamma)$:
 
 \begin{prop}\label{P:prodtheta} With notation as above, 
 \begin{equation}\label{E:prodtheta}
  \Theta_{ikm}(\gamma)
\theta_{ik}(\gamma)\theta_{km}(\gamma)=  \theta_{im}(\gamma)\end{equation}
 for all $\gamma\in\Mor(\mbu_{ikm})$, where 
 \begin{equation}\label{E:defTheta}
 \Theta_{ikm}(\gamma) =\bigl(h_{ikm}(\gamma), g_{ikm}(\gamma_0)\bigr),
 \end{equation}
 with 
 $$h_{ikm}(\gamma)=h_{ikm}(\gamma_1)h_{ikm}(\gamma_0)^{-1},$$
 where on the right we have the function $h_{ikm}$ on $U_{ikm}$ as in  (\ref{E:defgikhijk}), and $g_{ikm}(u)=\tau\bigl(h_{ikm}(u)\bigr)$ for all objects $u$ of $\mbu_{ikm}$. 
 \end{prop}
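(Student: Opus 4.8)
The plan is to compute both sides directly as elements of the group $\Mor(\mbg)\simeq H\rtimes_\alpha G$. Note first that $\Theta_{ikm}(\gamma)\theta_{ik}(\gamma)\theta_{km}(\gamma)$ is a product in the \emph{group} $\Mor(\mbg)$, formed using the semidirect-product operation (\ref{E:semidprod}) and not composition, so it is defined with no source/target matching required; I then only have to check that its two components agree with those of $\theta_{im}(\gamma)=\bigl(h_{im}(\gamma),g_{im}(\gamma_0)\bigr)$. I would handle the $G$-component and the $H$-component separately.

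The $G$-component is immediate. The $G$-component of a triple product $(h_3,g_3)(h_2,g_2)(h_1,g_1)$ is $g_3g_2g_1$, which here equals $g_{ikm}(\gamma_0)g_{ik}(\gamma_0)g_{km}(\gamma_0)=\tau\bigl(h_{ikm}(\gamma_0)\bigr)g_{ik}(\gamma_0)g_{km}(\gamma_0)$, and this is exactly $g_{im}(\gamma_0)$ by the second gerbal relation in (\ref{E:hijkgijik2}).

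The substance is the $H$-component. Applying (\ref{E:semidprod}) twice gives the $H$-component $h_3\,\alpha_{g_3}(h_2)\,\alpha_{g_3g_2}(h_1)$, where $(h_3,g_3)=\Theta_{ikm}(\gamma)$, $(h_2,g_2)=\theta_{ik}(\gamma)$, and $(h_1,g_1)=\theta_{km}(\gamma)$. The key simplification is that $g_{ikm}(\gamma_0)$ and $g_{ik}(\gamma_0)$ lie in $\tau(H)$ by their very definition, so, using that $\tau$ is a homomorphism together with the second Peiffer identity (\ref{E:Peiffer}) in the form $\alpha_{\tau(h)}(h')=hh'h^{-1}$, each $\alpha$-action rewrites as conjugation by the matching $H$-elements $h_{ikm}(\gamma_0)$ and $h_{ikm}(\gamma_0)h_{ik}(\gamma_0)$. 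Substituting the definitions $h_\bullet(\gamma)=h_\bullet(\gamma_1)h_\bullet(\gamma_0)^{-1}$, the conjugating factors telescope: the interior pairs $h_{ikm}(\gamma_0)^{-1}h_{ikm}(\gamma_0)$ and $h_{ik}(\gamma_0)^{-1}h_{ik}(\gamma_0)$ cancel, leaving the six-factor word $h_{ikm}(\gamma_1)h_{ik}(\gamma_1)h_{km}(\gamma_1)\,h_{km}(\gamma_0)^{-1}h_{ik}(\gamma_0)^{-1}h_{ikm}(\gamma_0)^{-1}$. Reading the first gerbal relation of (\ref{E:hijkgijik2}) at $\gamma_1$ on the left block and, inverted, at $\gamma_0$ on the right block collapses this to $h_{im}(\gamma_1)h_{im}(\gamma_0)^{-1}=h_{im}(\gamma)$, as needed.

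I expect the only real obstacle to be the bookkeeping in this $H$-component: keeping the $\alpha$-to-conjugation rewrites aligned so that the interior factors actually cancel, and recognizing the first gerbal relation appearing at both endpoints $\gamma_0$ and $\gamma_1$. This is entirely parallel to the computation (\ref{E:hikmcomput})--(\ref{E:hikmcomput2}) in the proof of Proposition \ref{E:nattransTikm}, so I anticipate no new idea beyond careful tracking of the semidirect-product structure.
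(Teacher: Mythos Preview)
Your proof is correct and follows essentially the same route as the paper---compute the $H$- and $G$-components separately, convert the $\alpha$-actions to $H$-conjugation via the second Peiffer identity, telescope, and apply the first gerbal relation at the two endpoints $\gamma_0,\gamma_1$. The only difference is organizational: the paper verifies the equivalent rearrangement $\Theta_{ikm}(\gamma)^{-1}\theta_{im}(\gamma)=\theta_{ik}(\gamma)\theta_{km}(\gamma)$, computing the inverse explicitly, whereas your direct triple product is slightly more streamlined.
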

\begin{proof}  The $H$-component  of $\theta_{ik}(\gamma)\theta_{km}(\gamma)$ is
\begin{equation}\label{E:hikmstuff1}
\begin{split}
&\Bigl(\theta_{ik}(\gamma)\theta_{km}(\gamma)\Bigr)_H  \\
&=
 h_{ik}(\gamma)g_{ik}(\gamma_0)h_{km}(\gamma)g_{ik}(\gamma_0)^{-1} \\
&= h_{ik}(\gamma)h_{ik}(\gamma_0)h_{km}(\gamma)h_{ik}(\gamma_0)^{-1}\\
& = h_{ik}(\gamma_1)h_{ik}(\gamma_0)^{-1}\cdot h_{ik}(\gamma_0)\cdot h_{km}(\gamma_1)h_{km}(\gamma_0)^{-1}\cdot h_{ik}(\gamma_0)^{-1}\\
&= h_{ik}(\gamma_1)h_{km}(\gamma_1)\bigl(h_{ik}(\gamma_0)h_{km}(\gamma_0)\bigr)^{-1}.
\end{split}
\end{equation}
On the other hand, switching notation and writing $hg$ for  $(h,g)\in H\times_{\alpha}G$, and $g_{ikm}=\tau(h_{ikm})$, we have:
\begin{equation}
\begin{split}
 \Theta_{ikm}(\gamma)^{-1}\theta_{im}(\gamma) &= \bigl(h_{ikm}(\gamma_1)h_{ikm}(\gamma_0)^{-1} g_{ikm}(\gamma_0)\bigr)^{-1}\theta_{im}(\gamma)  \\
&= g_{ikm}(\gamma_0)^{-1}h_{ikm}(\gamma_0)h_{ikm}(\gamma_1)^{-1} \, h_{im}(\gamma_1) h_{im}(\gamma_0)^{-1}g_{im}(\gamma_0) \end{split}
\end{equation}
Inserting $g_{ikm}(\gamma_0)g_{ikm}(\gamma_0)^{-1}$ before the term $g_{im}(\gamma_0)$, we have:
\begin{equation}
\begin{split}
 & \Theta_{ikm}(\gamma)^{-1}\theta_{im}(\gamma) \\
&=g_{ikm}(\gamma_0)^{-1}h_{ikm}(\gamma_0)\cdot h_{ikm}(\gamma_1)^{-1}\,h_{im}(\gamma_1)\cdot h_{im}(\gamma_0)^{-1}g_{ikm}(\gamma_0)\cdot \\
&\qquad\qquad \cdot g_{ikm}(\gamma_0)^{-1}g_{im}(\gamma_0)\\ \\
&=h_{ikm}(\gamma_0)^{-1}h_{ikm}(\gamma_0)\cdot h_{ikm}(\gamma_1)^{-1}\,h_{im}(\gamma_1)\cdot h_{im}(\gamma_0)^{-1}h_{ikm}(\gamma_0)\cdot \\
&\qquad\qquad \cdot g_{ikm}(\gamma_0)^{-1}g_{im}(\gamma_0)
\end{split}
\end{equation}
where we have used  the second Peiffer identity (\ref{E:Peiffer}) to switch the conjugation by $g_{ikm}$ to conjugation by $h_{ikm}$. 
Next, using the gerbal relations (\ref{E:hijkgijik2}) we conclude that
\begin{equation}\label{E:hikmstuff2}
\begin{split}
 & \Theta_{ikm}(\gamma)^{-1}\theta_{im}(\gamma) \\
&=h_{ik}(\gamma_1)h_{km}(\gamma_1)\bigl(h_{ik}(\gamma_0)h_{km}(\gamma_0)\bigr)^{-1}  \cdot g_{ikm}(\gamma_0)^{-1}g_{im}(\gamma_0)\\
 \end{split}
\end{equation}
The $H$-component of this clearly matches the right hand side of (\ref{E:hikmstuff1}):
\begin{equation}\label{E:prodthetaH}
  \bigl(\Theta_{ikm}(\gamma)^{-1}\theta_{im}(\gamma)\bigr)_H=
\bigl(\theta_{ik}(\gamma)\theta_{km}(\gamma)\bigr)_H.\end{equation}

The $G$-component of $\theta_{ik}(\gamma)\theta_{km}(\gamma)$ is
$$g_{ik}(\gamma)g_{km}(\gamma)$$
which again matches the $G$-component $g_{ikm}(\gamma_0)^{-1}g_{im}(\gamma_0)$ on the right hand side of (\ref{E:hikmstuff2}) again by the  gerbal relations  (\ref{E:hijkgijik2}). 

We have thus shown that $ \Theta_{ikm}(\gamma)^{-1}\theta_{im}(\gamma)$ equals $
 \theta_{ik}(\gamma)\theta_{km}(\gamma)$.
   \end{proof}

   \subsection{Quotients for cocycles}
   
   We continue with the  same notation and framework.  In particular, $(G, H,\alpha,\tau)$ and $(H, J, \alpha',\tau')$ are Lie crossed modules. The image $\tau'(J)$ is a normal subgroup of $H$, as noted earlier in (\ref{E:normaltau}).   Then $\tau\tau'(J)$ is a subgroup of $G$.    We assume that $\tau'(J)$ and $\tau\tau'(J)$ are {\em closed} subgroups  of $H$ and of $G$, respectively. Let us observe that $\tau\tau'(J)$ is normal inside $\tau(H)$:
   \begin{equation}\label{E:normaltauH}
   \tau(h)\tau\tau'(j)\tau(h)^{-1} =\tau\bigl(h\tau'(j)h^{-1}\bigr)=\tau\tau'\bigl(\alpha'_h(j)\bigr)
   \end{equation}
   for all $h\in H$ and $j\in J$. Let $\ovG$ be the quotient group
   \begin{equation}\label{E:ovG}
   \ovG=G/\tau\tau'(J).
   \end{equation}  
   We recall  from (\ref{E:defgikhijk}) the functions
   $$g_{ik}=\tau (h_{ik}):U_{ik}\to G.$$
   Thus, by  the normality observation (\ref{E:normaltauH}), 
   $$g_{ik}(u)\tau\tau'(J)g_{ik}(u)^{-1}=\tau\tau'(J),$$
   for all $u\in U_{ik}$. Let us recall  from (\ref{E:hijkgijik2}) the gerbal relation
   \begin{equation}\label{E:gikgkm}
   g_{im}(u) =\tau\bigl(h_{ikm}(u)\bigr)g_{ik}(u)g_{km}(u)
   \end{equation} for all $u\in U_{ikm}$.  
 Thus the functions $g_{ik}$ form a cocycle modulo $\tau\tau'(J)$. Working with the  $\ovG$-valued   functions
    \begin{equation}\label{E:bartgik}
   \ovg_{ik}:U_{ik}\to\ovG:u\mapsto  {\overline g}_{ik}(u) =g_{ik}(u) \tau\tau'(J)\qquad\hbox{for $u\in U_{ikm}$,}
    \end{equation}
  we have then
    \begin{equation}\label{E:gikcocyc}
    \ovg_{ik}(u)\ovg_{km}(u) =\ovg_{im}(u)  \qquad\hbox{ for all $u\in U_{ikm}$.}  \end{equation}
    Thus, $\{\ovg_{ik}\}$ is a genuine $\ovG$-valued cocycle, associated to the covering $\{U_i\}_{i\in {\mathcal I}}$, in the traditional sense.
   
\subsection{A bundle from the cocycle}\label{ss:abfc}
   Let 
   $$X\to B$$
    be the  bundle  
     that is specified by the open covering $\{U_i\}_{i\in {\mathcal I}}$ and the $\ovG$-valued transition functions $\{\ovg_{ik}\}$.   A point of ${\mathbf X}$ is an equivalence class
     $$[i, u, \ovg],$$
     where $(i,u,\ovg), (j,v,\ovg')$, with $u\in U_i$, $v\in U_j$, and $\ovg,\ovg'\in\ovG$,  are `equivalent' if $v=u$ and
     $$\ovg'=\ovg_{ji}(u)\ovg.$$
     That this is in fact an equivalence relation on the set
     $$\cup_{i\in {\mathcal I}}\{i\}\times U_i\times \ovG$$
     is readily checked. We will return to more on this in section \ref{s:cbcld}.
     
     \subsection{Morphism cocycles} Turning now to morphisms, let us recall the relation (\ref{E:prodtheta}):
      \begin{equation}\label{E:prodtheta2}
  \Theta_{ikm}(\gamma)
\theta_{ik}(\gamma)\theta_{km}(\gamma)=  \theta_{im}(\gamma)\end{equation}
 for all $\gamma\in\Mor(\mbu_{ikm})$, where 
 \begin{equation}\label{E:defTheta2}
 \begin{split}
 \Theta_{ikm}(\gamma) &=\Bigl(h_{ikm}(\gamma), g_{ikm}(\gamma_0)\bigr) =\bigl(\tau'\bigl(j_{ikm}(\gamma)\bigr), \tau\bigl(h_{ikm}(\gamma_0)\bigr)\Bigr)\\
 &=\Bigl(\tau'\bigl(j_{ikm}(\gamma_1)j_{ikm}(\gamma_0)^{-1}\bigr), \, \tau\tau'\bigl(j_{ikm}(\gamma_0)\bigr)\Bigr),
 \end{split}
 \end{equation}
 and
 \begin{equation}\label{E:defthetaim}
 \theta_{im}(\gamma) =\bigl(h_{im}(\gamma), g_{im}(\gamma_0)\bigr).
 \end{equation}
 Thus the  $H\rtimes_{\alpha}G$-valued functions $\theta_{ij}$ form a cocycle modulo the elements of the form $\bigl(\tau'(j),\tau\tau'(j')\bigr)$ in $H\rtimes_{\alpha}G$, where $j, j'\in J$.  Let us denote the set of all such elements by $J_H$:
 \begin{equation}\label{E:defJH}
 J_H=\{\bigl(\tau'(j), \tau\tau'(j')\bigr)\,:\,j, j'\in J \}  \subset H\rtimes_{\alpha}G.
 \end{equation}
We now verify that $J_H$ is   a normal subgroup of $H\rtimes_{\alpha}\tau(H)$, so that there is a useful notion of equality `modulo' $J_H$.
 
 \begin{prop}\label{P:JH}  The mapping
 \begin{equation}\label{E:ovtau}
 {\overline\tau}:J\rtimes_{\alpha'}H\to H\rtimes_{\alpha}G: (j,h)\mapsto \bigl(\tau'(j), \tau(h)\bigr)
 \end{equation}
 is a homomorphism. The set $J_H$ forms a normal subgroup of $H\rtimes_{\alpha}\tau(H)\subset \Mor(\mbg)$.
 \end{prop}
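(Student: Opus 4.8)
The plan is to treat the two assertions in turn, in both cases reducing everything to the two Peiffer identities (\ref{E:Peiffer}), one coming from the crossed module $(G,H,\alpha,\tau)$ and one from $(H,J,\alpha',\tau')$.

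First I would verify that $\overline\tau$ is a homomorphism by directly comparing the two sides. Expanding the product $(j_2,h_2)(j_1,h_1)=(j_2\alpha'_{h_2}(j_1),h_2h_1)$ in $J\rtimes_{\alpha'}H$ and applying $\overline\tau$, the $H$-component is $\tau'(j_2)\tau'(\alpha'_{h_2}(j_1))$, which the \emph{first} Peiffer identity for $(H,J,\alpha',\tau')$ rewrites as $\tau'(j_2)\,h_2\tau'(j_1)h_2^{-1}$. On the other side, $\overline\tau(j_2,h_2)\,\overline\tau(j_1,h_1)$ computed in $H\rtimes_\alpha G$ via (\ref{E:semidprod}) has $H$-component $\tau'(j_2)\alpha_{\tau(h_2)}(\tau'(j_1))$, which the \emph{second} Peiffer identity for $(G,H,\alpha,\tau)$ rewrites as the \emph{same} expression $\tau'(j_2)\,h_2\tau'(j_1)h_2^{-1}$. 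Since the $G$-components are both $\tau(h_2)\tau(h_1)=\tau(h_2h_1)$, the two sides agree. The point to emphasise is that it is exactly the interplay of the two Peiffer identities---one from each crossed module---that makes the $\alpha$-conjugation and the $\alpha'$-conjugation coincide.

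Next I would show that $J_H$ is a subgroup by identifying it as a homomorphic image. The set $\{(j,\tau'(j')):j,j'\in J\}$ is precisely $J\rtimes_{\alpha'}\tau'(J)$, a subgroup of $J\rtimes_{\alpha'}H$ because $\tau'(J)$ is a subgroup of $H$; applying the homomorphism $\overline\tau$ just established carries it onto $J_H=\{(\tau'(j),\tau\tau'(j')):j,j'\in J\}$, as defined in (\ref{E:defJH}), so $J_H$ is a subgroup. Its components lie in $\tau'(J)\subseteq H$ and in $\tau\tau'(J)\subseteq\tau(H)$, giving $J_H\subseteq H\rtimes_\alpha\tau(H)$, as required.

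Finally, for normality I would conjugate a general element $(\tau'(j),\tau\tau'(j'))\in J_H$ by a general $(h,g)\in H\rtimes_\alpha\tau(H)$, writing $g=\tau(h_0)$. The $G$-component of the conjugate is $g\,\tau\tau'(j')\,g^{-1}$, which returns to $\tau\tau'(J)$ at once by the normality observation (\ref{E:normaltauH}). The harder part, and the main obstacle, is the $H$-component: the $\alpha$-action inherited from the $G$-coordinate must be converted, through the second Peiffer identity, into honest conjugation inside $H$, and the resulting factors must then be recognised as remaining in $\tau'(J)$ using the normality of $\tau'(J)$ in $H$ (itself a consequence of the first Peiffer identity, cf.\ (\ref{E:normaltau})). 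A convenient bookkeeping device is to observe that as a subset $J_H=\tau'(J)\times\tau\tau'(J)$ and that $\tau'(J)\times\{e\}$ is already normal in $H\rtimes_\alpha\tau(H)$; this lets one reduce the problem to conjugating the single factor $\{e\}\times\tau\tau'(J)$, after which the computation is short. Keeping the $\alpha$- and $\alpha'$-actions straight through these substitutions is where care is needed, but once both components are seen to land back in $\tau'(J)$ and $\tau\tau'(J)$ respectively, normality follows.
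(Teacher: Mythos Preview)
Your proposal is correct and, for the homomorphism and subgroup claims, follows the paper's argument essentially verbatim: compute both sides using (\ref{E:semidprod}), match the $H$-components via the second Peiffer identity for $(G,H,\alpha,\tau)$ and the first for $(H,J,\alpha',\tau')$, and then identify $J_H$ as $\overline\tau\bigl(J\rtimes_{\alpha'}\tau'(J)\bigr)$.

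For normality the paper takes the direct route: it conjugates a general element $\bigl(\tau'(j),\tau(h_1)\bigr)$ by $\bigl(h,\tau(h_0)\bigr)$, expands fully (equation (\ref{E:JHnormal})), and only \emph{afterwards} imposes $h_1=\tau'(j_1)$ to see that the result lies in $J_H$ (equation (\ref{E:JHnormal2})). Your decomposition $J_H=\bigl(\tau'(J)\times\{e\}\bigr)\cdot\bigl(\{e\}\times\tau\tau'(J)\bigr)$, together with the observation that the first factor is already normal, is a genuine simplification: it reduces the work to conjugating $\bigl(e,\tau\tau'(j')\bigr)$, whose $H$-component is a commutator $hah^{-1}a^{-1}$ with $a=h_0\tau'(j')h_0^{-1}\in\tau'(J)$, visibly in $\tau'(J)$ by normality of $\tau'(J)\trianglelefteq H$. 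This avoids the long expansion in (\ref{E:JHnormal}) and makes the role of the hypothesis $g\in\tau(H)$ (needed so that $\alpha_g$ restricts to conjugation in $H$ and hence preserves $\tau'(J)$) more transparent. Either route works; yours is a bit cleaner.
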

 \begin{proof} First let us check that $\overline\tau$ is a homomorphism:
 \begin{equation}\label{E:JHsubgroup}
 \begin{split}
 \bigl(\tau'(j_2),\tau(h_2)\bigr) \bigl(\tau'(j_1),\tau(h_1)\bigr) &=
 \Bigl(\tau'(j_2)\alpha_{\tau(h_2)}\bigl(\tau'(j_1)\bigr), \,\tau(h_2)\tau(h_1)\Bigr)\\
 &=\Bigl(\tau'(j_2)h_2\tau'(j_1)h_2^{-1},\,\tau(h_2h_1)\Bigr)\\
 &\qquad\hbox{by the second Peiffer identity in (\ref{E:Peiffer}) }\\
&=\Bigl(\tau'(j_2) \tau'\bigl(\alpha'_{h_2}(j_1)\bigr),\,\tau(h_2h_1)\Bigr)\\
 &\qquad\hbox{by the first Peiffer identity in (\ref{E:Peiffer}) }\\
&=\Bigl(\tau'\bigl(j_2\alpha'_{h_2}(j_1)\bigr), \tau(h_2h_1)\Bigr). \end{split}
 \end{equation}
 Hence   $J_H$, being the image under ${\overline\tau}$  of the subgroup $J\rtimes_{\alpha'}\tau'(J)\subset J\rtimes_{\alpha'}H$,  is a subgroup of $H\rtimes_{\alpha}G$. This image is clearly contained inside  $H\rtimes_{\alpha}\tau(H)$.
 
 Let us now  work out how elements of  
 $${\rm Im}({\overline\tau})= \tau'(J)\rtimes_{\alpha}\tau(H)$$
  behave under conjugation by elements of $H\rtimes_{\alpha}\tau(H)$. The advantage of working with the smaller subgroup $H\rtimes_{\alpha}\tau(H)$ rather than $H\rtimes_{\alpha}G$ is that we have the relation
 $$\alpha_g\bigl(\tau'(j)\bigr)=h\tau'(j)h^{-1}=\tau'\bigl(\alpha'_h(j)\bigr)\in \tau'(J),$$
 if $g=\tau(h)$, $h\in H$ and $j\in J$. Now 
  let $h,h_0,h_1\in H$ and $j\in J$; then, working in the group $H\rtimes_{\alpha}G$, we have the conjugation
 \begin{equation}\label{E:JHnormal}\begin{split}
 \bigl(h,\tau(h_0)\bigr)\bigl(\tau'(j), \tau(h_1)\bigr)\bigl(h,\tau(h_0)\bigr)^{-1} &\\
 &\hskip -2in = \Bigl(h\alpha_{\tau(h_0)}\bigl(\tau'(j)\bigr), \,\tau(h_0h_1)\Bigr)\bigl(h,\tau(h_0)\bigr)^{-1} \\
 &\hskip -2in = \Bigl(h\alpha_{\tau(h_0)  }\bigl(\tau'(j)\bigr), \,\tau(h_0h_1)\Bigr)\Bigl(\alpha_{\tau(h_0) ^{-1} }(h^{-1}),  \tau(h_0)^{-1}\bigr)  \\
 &\hskip -2in =\Bigl(h\alpha_{\tau(h_0)}\bigl(\tau'(j)\bigr) \alpha_{\tau(h_0h_1)}\alpha_{\tau(h_0)^{-1}}(h^{-1}), \,\tau(h_0h_1h_0^{-1})\Bigr)\\
 &\hskip -2in= \Bigl(h\cdot h_0\tau'(j)h_0^{-1}\cdot (h_0h_1h_0^{-1})h^{-1}(h_0h_1h_0^{-1})^{-1}, \,\tau(h_0h_1h_0^{-1})\Bigr)\\
 &\hskip -2in=\Bigl(\tau'\bigl(\alpha'_{hh_0}(j)\bigr)\cdot hh_0h_1h_0^{-1}h^{-1}\cdot h_0h_1^{-1}h_0^{-1}, \,\tau(h_0h_1h_0^{-1})\Bigr).
 \end{split}
 \end{equation}
  As it stands it is not apparent if this is an element of $J_H$; however, we need to insert the condition that the second component of $\bigl(\tau'(j), \tau(h_1)\bigr)$ is in fact in $\tau\tau'(J)$, by requiring that
 $$ h_1 = \tau'(j_1),$$
 for some $j_1\in J$. Then we have
  \begin{equation}\label{E:JHnormal2}\begin{split}
 \bigl(h,\tau(h_0)\bigr)\bigl(\tau'(j), \tau(h_1)\bigr)\bigl(h,\tau(h_0)\bigr)^{-1} & \\
 &\hskip -2in=\Bigl(\tau'\bigl(\alpha'_{hh_0}(j)\bigr)\cdot hh_0h_1h_0^{-1}h^{-1}\cdot h_0h_1^{-1}h_0^{-1}, \,\tau(h_0h_1h_0^{-1})\Bigr)\\
 &\hskip -2in=\Bigl(\tau'\bigl(\alpha'_{hh_0}(j)\alpha'_{hh_0}(j_1)\alpha'_{h_0}({j_1}^{-1})\bigr),  \,\tau\tau'\bigl(\alpha'_{h_0}(j_1)\bigr) \Bigr),
 \end{split}
 \end{equation}
 which is indeed an element of $J_H$.
  \end{proof}

  \subsection{The quotient category ${\overline{\mbg}}$.}\label{ss:qG}
  We denote by $\ovGb$ the pair comprised of the  object set 
\begin{equation}\label{E:ovGb}
\Obj(\ovGb) =G/\tau\tau'(J)
\end{equation}
 and the morphism set
\begin{equation}\label{E:ovGbmor}
\Mor(\ovGb) =\bigl(H\rtimes_{\alpha}G\bigr)/J_H,
\end{equation}
where, as before,
$$J_H =\tau'(J)\times \tau\tau'(J)\subset H\rtimes_{\alpha}\tau(H)\subset H\rtimes_{\alpha}G=\Mor(\mbg).$$ 
Let us note that $J_H$ {\em is closed under composition}: if $f'_2=\bigl(\tau'(j_2), \tau\tau'(j'_2)\bigr)$ and $f'_1=\bigl(\tau'(j_1), \tau\tau'(j'_1)\bigr)$ are  such that the composition $f'_2\circ f'_1$ is defined as a morphism of $\mbg$ then
\begin{equation}\label{E:JHcomp}
f'_2\circ f'_1= \bigl(\tau'(j_2j_1), \tau\tau'(j_1)\bigr)\in J_H.
\end{equation}

We define source and target maps
\begin{equation}\label{stovGb}
\begin{split}
s: &\Mor(\ovGb)\to \Obj(\ovGb)  \\
t:& \Mor(\ovGb)\to \Obj(\ovGb) 
\end{split}
\end{equation}
to be the maps induced by the source and target maps in the category $\mbg$. The following result verifies that these maps are well-defined and lead to a category $\ovGb$.

\begin{prop}\label{P:ovGcat} $\ovGb$, as specified above, is a category under composition, source and target as inherited from $\mbg$.  The quotient mappings
\begin{equation}
\begin{split}
\Obj(\mbg)\to \Obj(\ovGb)&: g  \mapsto  g\tau\tau'(J) \\
\Mor(\mbg)\to \Mor(\ovGb)&: (h,g)  \mapsto  (h,g)J_H
\end{split}
\end{equation}
specify a functor
\begin{equation}\label{E:qfunctor}
q:\mbg\to\ovGb.
\end{equation}
\end{prop}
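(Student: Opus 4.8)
The plan is to realize each morphism of $\ovGb$ as a coset $(h,g)J_H=\{(h,g)f:f\in J_H\}$ (the right-translate of $J_H$), and to build the category structure by lifting to $\mbg$, performing the operation there, and pushing the result back down. Since $\ovGb$ is only required to be a category (not a categorical group), it is immaterial that $J_H$ is normal merely inside $H\rtimes_\alpha\tau(H)$ rather than all of $\Mor(\mbg)$; we need only a well-defined quotient \emph{set} of morphisms together with compatible source, target, identities, and composition.

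First I would check that source and target descend. Writing a general element of $J_H$ as $(\tau'(j),\tau\tau'(j'))$ and using the product rule \eqref{E:semidprod}, one computes
\[
(h,g)\bigl(\tau'(j),\tau\tau'(j')\bigr)=\bigl(h\,\alpha_g(\tau'(j)),\, g\,\tau\tau'(j')\bigr),
\]
so the source $g$ is multiplied on the right by $\tau\tau'(j')\in\tau\tau'(J)$, while the target, via the first Peiffer identity \eqref{E:Peiffer} in the form $\tau(\alpha_g(\tau'(j)))=g\,\tau\tau'(j)\,g^{-1}$, becomes $\tau(h)g\cdot\tau\tau'(jj')$. Hence both $s$ and $t$ change only within $\tau\tau'(J)$ along a coset, so they induce well-defined maps $\Mor(\ovGb)\to\Obj(\ovGb)=G/\tau\tau'(J)$, as claimed.

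Next I would define composition. Two classes are composable in $\ovGb$ precisely when the target of the first equals the source of the second in $G/\tau\tau'(J)$, say $g_2=\tau(h_1)g_1\,\tau\tau'(j_0)$. Right-multiplying the representative $(h_2,g_2)$ by $(e,\tau\tau'(j_0^{-1}))\in J_H$ resets its source to $\tau(h_1)g_1=t(h_1,g_1)$, so composable classes always admit representatives that are composable in $\mbg$; I then set $[(h_2,g_2)]\circ[(h_1,g_1)]:=[(h_2,g_2)\circ(h_1,g_1)]=[(h_2h_1,g_1)]$ using \eqref{E:morcompHG}. \textbf{The main obstacle} is showing this composite is independent of the composable representatives. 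Given a second composable pair, write it as $(a_1,b_1)(\tau'(j_1),\tau\tau'(j_1'))$ and $(a_2,b_2)(\tau'(j_2),\tau\tau'(j_2'))$ with $b_2=\tau(a_1)b_1$. Because $b_2=\tau(a_1)b_1$, the second Peiffer identity gives $\alpha_{b_2}(\tau'(j_2))=\alpha_{\tau(a_1)}\alpha_{b_1}(\tau'(j_2))=a_1\,\alpha_{b_1}(\tau'(j_2))\,a_1^{-1}$, and a short computation with \eqref{E:morcompHG} and \eqref{E:semidprod} collapses the new composite to $\bigl((a_2a_1)\,\alpha_{b_1}(\tau'(j_2j_1)),\,b_1\tau\tau'(j_1')\bigr)$. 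Multiplying $(a_2a_1,b_1)^{-1}$ by this element and simplifying the $\alpha_{b_1^{-1}}\alpha_{b_1}$ cancellation yields exactly $(\tau'(j_2j_1),\tau\tau'(j_1'))\in J_H$, so the two composites lie in the same coset. This is the heart of the argument; the closure relation \eqref{E:JHcomp} is the special case confirming that composites of $J_H$-elements remain in $J_H$.

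Finally I would dispatch the routine items. The class $(e,g)J_H$ depends only on $g\,\tau\tau'(J)$, since $(e,g)^{-1}(e,g\tau\tau'(j'))=(e,\tau\tau'(j'))\in J_H$, and one checks directly that it is a two-sided unit for the composition just defined, giving the identities of $\ovGb$; associativity then follows by choosing a composable chain of three representatives in $\mbg$ and descending the (associative) triple composite. The maps of \eqref{E:qfunctor} are the canonical quotient maps, so by construction they intertwine $s$ and $t$, send $(e,g)\mapsto(e,g)J_H=\mathrm{id}$, and carry a composite $(h_2h_1,g_1)$ of composable morphisms to the class we defined as their composite; hence $q$ is a functor.
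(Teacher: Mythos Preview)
Your proof is correct and reaches the same conclusion as the paper, but the route for the ``main obstacle'' differs. The paper handles well-definedness of composition in one line using the interchange law (i.e., functoriality of the multiplication $\mbg\times\mbg\to\mbg$): if $f_2\circ f_1$ is defined and $f'_1,f'_2\in J_H$ are such that $(f_2f'_2)\circ(f_1f'_1)$ is defined, then automatically $t(f'_1)=s(f'_2)$ and
\[
(f_2f'_2)\circ(f_1f'_1)=(f_2\circ f_1)(f'_2\circ f'_1),
\]
so the whole question reduces to the closure statement \eqref{E:JHcomp}. Your explicit semidirect-product computation is, in effect, a coordinate verification of this interchange identity in the special case at hand. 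What your approach buys is self-containment (no appeal to the categorical-group axiom) and an explicit check that composable cosets admit composable representatives, a point the paper leaves implicit; what the paper's approach buys is a structural explanation that makes the result transparent and avoids the chain of $\alpha$-manipulations. For the source/target maps the paper similarly argues more abstractly, observing that $s$ and $t$ are group homomorphisms carrying $J_H$ into $\tau\tau'(J)$, whereas you compute $(h,g)\cdot J_H$ directly; both are fine.
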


\begin{proof}
The source map
$$s:\Mor(\mbg)\to \Obj(\mbg)$$
maps  an element $\bigl(\tau'(j_1), \tau\tau'(j_2)\bigr)$  of the subgroup 
$$J_H =\tau'(J)\times \tau\tau'(J)\subset H\rtimes_{\alpha}\tau(H)\subset H\rtimes_{\alpha}G$$ 
to $\tau\tau'(j_2)$, which lies in $\tau\tau'(J)$. The target map 
$$t:\Mor(\mbg)\to \Obj(\mbg)$$
carries $\bigl(\tau'(j_1), \tau\tau'(j_2)\bigr)$ to $\tau\tau'(j_1j_2)$, which is also in $\tau\tau'(J)$. Hence $s$ and $t$ induce well-defined maps 
\begin{equation}
\bigl(H\rtimes_{\alpha}G\bigr)/J_H \to G/\tau\tau'(J)
\end{equation}
where the set on the left is the quotient set of cosets.

Let $f'_1, f'_2\in J_H$ be such that the composition
$$(f_2f'_2)\circ (f_1f'_1)$$
in $\Mor(\mbg)$ is meaningful. Then
\begin{equation}\label{E:f2fp2f1fp1}
(f_2f'_2)\circ (f_1f'_1) =(f_2\circ f_1)(f'_2\circ f'_1)\equiv (f_2\circ f_1)\qquad\hbox{mod $J_H$,}
\end{equation}
by (\ref{E:JHcomp}).  Hence the composition law in $\Mor(\mbg)$ induces a well-defined composition law in the quotient $\bigl(H\rtimes_{\alpha} G\bigr)/J_H$.

It is clear that for each $x=g\tau\tau'(J)\in \ovG$ the morphism $(e, g)\in H\rtimes_{\alpha}G$ induces the identity morphism $1_x$.

Thus $\ovGb$ is a category, and  the way we have defined source, target, identity morphisms, and composition in $\ovGb$ ensures that $q:\mbg\to\ovGb$ is a functor.
 \end{proof}

 \subsection{The categorical group $\ovGb_{\tau}$}\label{ss:qGt} Let us recall that $J_H$ is a normal subgroup inside $H\rtimes_{\alpha}\tau(H)$ rather than in $H\rtimes_{\alpha}G$. Thus the quotient
 $$(H\rtimes_{\alpha}\tau(H))/J_H$$
 is actually a group. Moreover, the subgroup  $H\rtimes_{\alpha}\tau(H)$ is closed under the composition law in $H\rtimes_{\alpha}G$; in fact if
 $$f_1=\bigl(h_1, \tau(h'_1)\bigr), f_2=\bigl(h_2, \tau(h'_2)\bigr)\in H\rtimes_{\alpha}\tau(H)\subset \Mor(\mbg),$$
are morphisms for which  the composition $f_2\circ f_1$ is meaningful, that is
$$\tau(h'_2)= \tau(h_1h'_1),$$
we have
\begin{equation}\label{E:compo}
f_2\circ f_1=\bigl(h_2h_1,\tau(h'_1)\bigr)\in H\rtimes_{\alpha}\tau(H).
\end{equation}
 
 As we have seen in (\ref{E:JHcomp}) $J_H$ is also closed under composition and by (\ref{E:f2fp2f1fp1}) the composition law in $H\rtimes_{\alpha}G$ then induces a well-defined operation on the quotient
 $$\bigl(H\rtimes_{\alpha}\tau(H)\bigr)/J_H.$$
 In summary, for the subcategory $\ovGb_{\tau}$ of $\ovGb$ specified by
 \begin{equation}\label{E:Gtaub}
 \begin{split}
 \Obj(\ovGb_{\tau}) &=\tau(H)/\tau\tau'(J)\\
 \Mor(\ovGb_{\tau}) &= \bigl(H\rtimes_{\alpha}\tau(H)\bigr)/J_H,
 \end{split}
 \end{equation}
 both object set and morphism set are groups, with the obvious quotient group structures. Moreover, since $s$ and $t$ are clearly group homomorphisms, $\ovGb_{\tau}$ is in fact a categorical group. Furthermore,  the `quotient' functor $q:\mbg\to\ovGb$ defined in (\ref{E:qfunctor}) restricts to a functor
 \begin{equation}\label{E:qfuncttau}
 q_{\tau}:\mbg_{\tau}\to \ovGb_{\tau},
 \end{equation}
  where $\mbg_{\tau}$ is the subcategory of $\mbg$ whose object set is $\tau(H)$ and whose morphisms are those morphism of $\mbg$ whose sources lie in $\tau(H)$. It is readily checked that $q_{\tau}$ 
  is a homomorphism both on objects and on morphisms.

 \subsection{Summary}  Starting with Lie crossed modules $(G, H, \alpha,\tau)$, associated with a categorical Lie group $\mbg$, and $(H, J,\alpha',\tau')$, associated with a categorical Lie group $\mbh$, and a manifold $B$, we introduced local data $(h_{ik}, j_{ikl})$,  named `gerbal cocycle', associated with an open covering $\{U_i\}_{i\in {\mathcal I}}$. From this data we  constructed  categories $\mbu_i$ (with objects being the points of $U_i$ and morphisms coming from the paths lying in $U_i$) and functors $\theta_{ik}:\mbu_{ik}\to \mbg$ that, along with certain natural transformations  ${\mathbb T}_{ikm}: \theta_{ik}\theta_{km}\Rightarrow\theta_{im}$, form a `functorial cocycle'.  We introduced a category $\ovGb$, by quotienting the object and morphism groups of $\Obj(\mbg)$ by subgroups specified by $\tau$ and $\tau'$. Inside $\ovGb$ is a subcategory $\ovGb_{\tau}$ that is a categorical group and contains all the geometrically relevant data. Our results in this section show that the system of function $\{\ovg_{ik}\}$, where $\ovg_{ik}=\tau(h_{ik})$, and $\{{\overline\theta}_{ik}\}$, where
 \begin{equation}\label{E:ovthetaik}
 {\overline\theta}_{ik}(\gamma)= { \theta}_{ik}(\gamma) J_H\in \bigl(H\rtimes_{\alpha}\tau(H)\bigr)/J_H,
 \end{equation}
 form cocycles in the traditional sense.

   \section{Construction of bundles from local data}\label{s:cbcld}
   
   In this section we start with a gerbal cocycle (\ref{E:hjgerb}) as described in the preceding section and construct a categorical bundle
   $${\mathbf X}\to {\mathbf B}$$
    as a limit of `local' categorical bundles ${\mathbf X}_{\alpha}\to {\mathbf U}_{\alpha}$. 
    
    We take as given a manifold $B$, an open covering $\{U_i\}_{i\in {\mathcal I}}$ of $B$, and a gerbal cocycle $\{h_{ik}, j_{ikl}\}$ as in (\ref{E:hjgerb}). We use the notation and constructions from the preceding section.  We also work with the  categorical group
    $$\ovGt,$$
    whose objects form  the quotient group
    $$\tau(H)/\tau\tau'(J),$$
    and whose morphisms form the quotient group
    $$\bigl(H\rtimes_{\alpha}\tau(H)\bigr)/J_H,$$
    where $J_H$ is the normal subgroup defined in (\ref{E:defJH}).
    We will make the standing assumption that $\tau(H)$, $\tau'(J)$ and $\tau\tau'(J)$ are {\em closed} subgroups; this ensures that the quotients are Lie groups.
    
    For the sake of practical convenience, mainly notational, we will assume that $\tau$ is surjective:
    \begin{equation}\label{E:tauH}
    \tau(H)=G.
    \end{equation}
    This makes it possible for us to work with  $\ovGb$ instead of $\ovGb_{\tau}$ as the structure categorical group for the resulting categorical principal bundle $\mbp$.

    \subsection{A  local  system of categorical bundles} As in subsection \ref{ss:covsubc}, we denote by ${S_{\mathcal I}}$ the set of all finite nonempty subsets  $I$ of ${\mathcal I}$ for which the intersection 
   \begin{equation}\label{E:Ualpha}
U_{I}=\cap_{k\in  I}U_k  \end{equation}
is nonempty. We can visualize $I$ itself as a simplex whose vertices are the points $i\in I$. We also have, for every $I\in {S_{\mathcal I}}$,  the category
$$\mbu_{I}$$
whose object set is $U_{I}$ and whose morphisms arise from the paths  on $B$ that lie entirely within each $U_i$ for $i\in I$:
\begin{equation}
\Mor(\mbu_{I})=\cap_{i\in I}\Mor(\mbu_i).
\end{equation}
Source, target, and composition are all inherited from any of the $\mbu_i$. It will be useful to keep track of the index $i$ and the collection of indices $I$.

We consider also the categories
$$\{(i,I)\}\times \mbu_I\times \ovGb,$$
where the objects are of the form $(i,I,u,\ovg)$, with $u\in U_I$ and $\ovg\in\ovG$,  and morphisms are of the form $(i,I, \gamma,\phi)$, where $\gamma\in\Mor(\mbu_I)$ and $\phi\in\Mor(\ovGb)$.
 Source and target maps are given by
  
  \begin{equation}\label{E:sfourtuple}\begin{split}
  s(i,I,\gamma,\phi)&= \bigl(i, I, s(\gamma), s(\phi)\bigr)\\
 t(i,I,\gamma,\phi)&= \bigl(i, I, t(\gamma), t(\phi)\bigr).
 \end{split}
 \end{equation}

 Consider a pair of indices $i,k\in {I}$, where $I\in {S_{\mathcal I}}$. We think of the open set $U_I$ and two trivializations of a bundle over $U_I$, with transition function $\ovg_{ik}$.  At the categorical level we have a functor
\begin{equation}\label{E:phiik}
\Phi_{ki}:\{(i,I)\}\times\mbu_{I}\times \overline{\mbg} \to \{(k,I)\}\times\mbu_{I}\times \overline{\mbg},
\end{equation}
which  is given on objects  by
$$(i,I, u, \ovg)\mapsto \bigl(k, I, u, {\ovg}_{ki}(u)\ovg\bigr)$$
and   is given on morphisms by
\begin{equation}(i,I, \gamma, \overline{\phi})\mapsto \bigl(k, I, \gamma, {\overline\theta_{ki}}(\gamma)\overline{\phi}\bigr).
\end{equation}
Moreover, if $I\subset J$ (so that $U_I$ is a larger set than $U_J$) and $i\in I$ then we have an `inclusion' morphism
\begin{equation}\label{E:incl}
\{(i,J)\}\times \mbu_J\times \ovGb\to \{(i,I)\}\times \mbu_I\times \ovGb
\end{equation}
which takes any object $(i,J,u,\ovg)$ to $(i,I,u,\ovg)$ and any morphism $(i,J, \gamma,\ovph)$ to $(i,I, \gamma,\ovph)$.

  \subsection{From local data to the global category} Our goal is to splice together the local data into a  base category and a `bundle' category ${\mathbf X}$. The base category is simply $\mbbb$, with object set the manifold $B$ and morphisms being smooth paths on $B$ with suitable identifications (as discussed in the context of (\ref{E:gam2gam1})). 
  We think of the category ${\mbx}$ as being a  limit of the system of categories $\{(i,I)\}\times \mbu_I\times{\ovl\mbg}$ along with the functors discussed in (\ref{E:phiik}) and (\ref{E:incl}). The  role played by $I$ in $(i,I,u,\ovg)$ is not essential and is meant only to help track the intersection sets $U_I$. 
  
\subsection{The object set of $\mbx$}  The object set of the category ${\mathbf X}$ is the bundle space $X$ for the bundle   obtained from the cocycles $\{{\overline g}_{ik}\,:\,i,k\in I\}$. Thus a point of $X$ is an equivalence class
  $$[i,u,\ovg],$$
  where $i\in I$, $u\in U_i$, $\ovg\in \ovG$, and the equivalence relation is defined by requiring that $[i,u,\ovg]$ and $[j,u',\ovg']$ be equal if and only if $u'=u\in U_i\cap U_j$ and
  $$\ovg'=g_{ji}(u)\ovg.$$
   We have then injections
 
 \begin{equation}
 \{(i,I)\}\times U_I\times \ovG\to X: (i,I,u,\ovg)\mapsto [i, u,\ovg].
 \end{equation}
 We have essentially seen this construction in subsection \ref{ss:abfc}.

\subsection{Towards morphisms}  Proceeding towards morphisms,   we need first the   quadruples
  $$(i, I, \gamma, \ovph),$$
  where  $i\in{\mathcal I}$, $I\in S_{\mathcal I}$, $\gamma\in \Mor({\mathbf U}_{I})$, $\ovph\in\Mor({\ovl\mbg})$.  Thus we have here a path $\gamma$ that lies inside the open set
  $$U_I=\cap_{j\in I}U_j,$$
  a particular index choice $i\in I$  that indicates a trivialization over $U_i$, and a morphism $\ovph =({\ovl h},\ovg)$ of ${\ovl\mbg}$ that we think of as indicating, through $\ovg$,  the `location' of the path in the bundle $X$  along with a decoration ${\ovl h}$ on it.  
     A difficulty arises when we try to form a composition
  $$(j, J, \gamma_j,\phi_j)\circ (i, I,\gamma_i,\phi_i)$$
  where the composition $s(\gamma_j)=t(\gamma_i)$. It is not apparent what this composition ought to be.  The `coarsest' solution to this difficulty is to define the composite simply to be the sequence of two quadruples
$$\bigl((i, I,\gamma_i,\phi_i), (j, J, \gamma_j,\phi_j)\bigr).$$

\subsection{Quivers}\label{ss:quiv} The language of quivers helps with the structure here. By a {\em quiver} $Q$  we mean a  set $E$ of  (directed) `edges' and a nonempty set $V$ of vertices, along with source and target maps $s, t:E\to V$. This gives rise to a category ${\mathbf C}_Q$, called the {\em free category} for $Q$: the object set is just $V$, and a morphism is a sequence $(e_1,\ldots, e_n)$ of edges with the target of each edge $e_i$ equal to the source of the next edge  $e_{i+1}$; in addition, we also include an identity morphism for each object. Composition is defined by concatenation of sequences. 

\subsection{The quiver of decorated local paths} Returning to our context, the vertex set $V$ for our quiver $Q$  is $X$, consisting of all equivalence classes $[i,I,u,\ovg]$; edges are    the quadruples
$$(i, I, \gamma_i, \phi_i),$$
with source and target given by
\begin{equation}\label{E:stquiver}\begin{split}
s(i, I, \gamma_i, \phi_i) &= [i, I, s(\gamma_i), s(\phi_i)]\\
t(i, I, \gamma_i, \phi_i) &= [i, I, t(\gamma_i), t(\phi_i)].
\end{split}
\end{equation}
  However, in the resulting quiver category ${\mathbf C}_Q$ certain morphisms need to be identified. It is convenient to do this at an algebraic level rather than `geometric', and so we turn first to the notion of the quiver algebra.
  
  \subsection{The quiver algebra}The {\em quiver algebra} $\mca_Q$ of a quiver $Q$  is the free algebra on the set of non-identity morphisms of the quiver category ${\mathbf C}_Q$ quotiented so that the product of two morphisms that have non-matching source-target relation is zero and in other cases the product is given by concatenation; thus an element of $\mca_Q$ can be expressed uniquely as a linear combination of sequences of paths of the form
$$(e_1,\ldots, e_n),$$
where $n\geq 1$, each $e_i\in E$,  and $t(e_i)=s(e_{i+1})$ for all $i\in\{1,\ldots,n\}$. The product of two such morphisms is given by
$$(e_1,\ldots, e_n)\cdot (e_{n+1},\ldots, e_m)= (e_1,\ldots, e_m) \quad\hbox{if $t(e_n)=s(e_{n+1})$,}$$
and is $0$ in all other cases. In particular, if $(e_1,\ldots, e_n)$ is above, with source-target matching for successive edges, then
\begin{equation}
   e_1e_2\ldots e_n=  (e_1,\ldots, e_n).
 \end{equation}

\subsection{A quotient of the quiver algebra} For the quiver algebra of our  path quiver ${\mathbf C}_Q$, let $N_0$ be the ideal in $\mca_{  Q }$ generated by all elements of the form
\begin{equation}\label{E:N0}
(i,I,\gamma, \ovph)- (j,J,\gamma, \ovt_{ij}(\gamma)\ovph),
\end{equation}
where $I, J\in S_{\mci}$, $i\in I$, $j\in J$, $\gamma$ is any morphism in $\mbu_{\{i,j\}}$, and $\ovph\in\Mor(\ovGb)$. Moreover, from a geometric viewpoint, if the sets $U_I$ and $U_J$ lie inside $U_K$ then, for any $\gamma_i\in\Mor(\mbu_i)$ and $\gamma_j\in\Mor(\mbu_j)$ for which $s(\gamma_j)=t(\gamma_i)$, the composition
  $$ ( j, J, \gamma_j,\ovph_j) \circ  (i, I,\gamma_i,\ovph_i)$$
  ought to be obtainable by transforming both to the $(k,K)$ system:
\begin{equation}
\begin{split}
&\bigl( k,K, \gamma_j,\ovt_{kj}(\gamma_j)\ovph_j)\bigr)\circ \bigl(k,K, \gamma_i, \ovt_{ki}(\gamma_i)\ovph_i\bigr)\\
&
=\Bigl(k, K, \gamma_j\circ\gamma_i, \bigl(\ovt_{kj}(\gamma_j)\ovph_j\bigr)\circ \bigl(\ovt_{ki}(\gamma_i)\ovph_i\bigr)\Bigr).
\end{split}
\end{equation}
With this in mind, we consider the ideal $N$   generated by $N_0$ along with all elements of  $\mca_{ Q }$ of the form

\begin{equation}\label{E:compiden}
(k,K, \gamma_k,  \ovph_k) -  (i, I,\gamma_i,\ovph_i)\cdot  ( j, J, \gamma_j,\ovph_j),
\end{equation}
where $I, J, K\in S_\mci$ are such that
 \begin{equation}\label{E:equivmor1}
  K\subset I\cap J \qquad\hbox{(which implies $U_I, U_J\subset U_K$) }
   \end{equation}
   and
  \begin{equation}\label{E:equivmor}
  \begin{split}
  \gamma_j\circ\gamma_i &=\gamma_k\\
 \bigl( \ovt_{kj}(\gamma_j)\ovph_j\bigr)\circ \bigl( \ovt_{ki}(\gamma_i)\ovph_i\bigr) &=\ovph_k.
 \end{split}
 \end{equation}
 Thus in the quotient algebra $\mca_Q/N$ we have
 \begin{equation}\label{E:mcaQprod}
 [k, K, \gamma_k, \ovph_k] =[i, I,\gamma_i,\ovph_i][ j, J, \gamma_j,\ovph_j].
 \end{equation}

 \subsection{Equivalent morphisms in the quotient} We define two non-identity morphisms of the category ${\mathbf C}_Q$ to be equivalent if they have the same image in the quotient algebra ${\mathcal A}_Q/N$.  We denote by
 $$[i, I, \gamma, \ovph]$$
 the equivalence class in ${\mathcal A}_Q/N$ corresponding to the element $(i,I, \gamma,\ovph)$ in ${\mathcal A}_Q$. Thus,
 \begin{equation}\label{E:Morident1}
[i,I,\gamma, \ovph] =[j,J,\gamma, \ovt_{ij}(\gamma)\ovph]
\end{equation}
if $\gamma\in\Mor(\mbu_{\{i,j\}})$.

   We denote the set of equivalence classes of non-identity morphisms of $\mbbc_Q$ by
\begin{equation}\label{E:mormbx}
\Mor(\mbx)_0,
\end{equation}
with notation anticipating our objective of showing that $\mbx$ is a category. The subscript $0$ here is a `temporary hold' pending inclusion of identity morphisms. 

\subsection{The mor-set $\Mor(\mbx)$} Now we include an identity morphism $1_x$ for each object $x\in \Obj(\mbx)$, and define
\begin{equation}
\Mor(\mbx)=\{1_x\,:\,x\in\Obj(\mbx)\}\cup \Mor(\mbx)_0.
\end{equation}

 \subsection{Source and target} The source and target maps
 $$s, t:E\to V$$
 of a quiver $Q$
 give rise to linear maps
 \begin{equation}
 s, t: \mca_Q  \to \mbr[V],
 \end{equation}
 where the latter is the free real vector space over the set $V$ of vertices (thus every element of $\mbr[V]$ is uniquely a  (finite) linear combination of the form $\sum_ja_jv_j$, with $a_j\in \mbr$ and $v_j\in V$). These maps do not respect multiplication since
 $$s(e_1e_2)=s(e_1) \qquad\hbox{and}\qquad t(e_1e_2)=t(e_2),$$
 for example. Now returning to our context, let us note that, with notation as in (\ref{E:compiden}), 
   \begin{equation}
 \begin{split}
 s(\ovph_k)= s\bigl( \ovt_{ki}(\gamma_i)\ovph_i\bigr)&=s\bigl(\ovt_{ki}(\gamma_i)\bigr)s(\ovph_i)\\
 &=\ovt_{ki}\bigl(s(\gamma_i)\bigr)s(\ovph_i),
 \end{split}
 \end{equation}
 and similarly for the targets.  Consequently we have the following source/target consistency:
 \begin{equation}\label{E:stconsist}
 \begin{split}
 s(k,K, s(\gamma_k),\ovph_k)  &=\bigl(k, K, s(\gamma_i),\ovt_{ki}\bigl(s(\gamma_i)\bigr)s(\ovph_i)\bigr) =s(i, I, s(\gamma_i),\ovph_i);\\
 t(k,K,\gamma_k,\ovph_k) &=t(j,  J, t(\gamma_j), \ovph_j).
 \end{split}
 \end{equation}
 Moreover, for $\gamma\in\mbu_{\{i,j\}}$, where $i\in I$ and $j\in J$, and any $\ovph\in\Mor(\ovGb)$, we have
\begin{equation}\label{E:socinsist2}
\begin{split}
s(i,I,\gamma,\ovph)&=s\bigl(j, J, s(\gamma), \ovt_{ij}(\gamma)\ovph\bigr)\\
t(i,I,\gamma,\ovph)&=t\bigl(j, J, t(\gamma), \ovt_{ij}(\gamma)\ovph\bigr).
\end{split}
\end{equation}
Because of these relations and (\ref{E:stconsist}),   the value of $s$ on a quiver algebra element of the form $e_1\ldots e_n$, being just $s(e_1)$,  remains unchanged if any of the $e_i$ is replaced by either a  single-edge element $e$ or a two-edge morphism $ee'$ related to $e_i$ in any of the ways described above in (\ref{E:Morident1}). An analogous statement holds for the target map $t$. Hence the source and target maps descend to well-defined linear maps
\begin{equation}\label{E:stquot}
s, t:\mca_{Q}/N\to \mbr[V].
\end{equation}
In particular, $s$ and $t$ are well-defined maps when restricted to $\Mor(\mbx)_0$ (the values of $s$ and $t$ on elements of $\Mor(\mbx)_0$ are elements of $V$). For the identity morphism $1_x$ we define both source and target to be $x$.  

\subsection{Composition of morphisms in $\mbx$} The multiplication operation on $\mca_Q$ induces a well-defined operation on the quotient $\mca_Q/N$ and this restricts to a well-defined operation on $\Mor(\mbx)_0$. We use this to define composition of elements in $\Mor(\mbx)_0$. Concretely, composition is given  by concatenation:
\begin{equation}\label{E:MorXcompo}
(e_{n+1}\ldots e_{n+m})\circ (e_1\ldots e_n)= e_1\ldots e_n e_{n+1}\ldots e_{n+m},
\end{equation}
where each $e_j$ is of the form
$$e_j =[j, J, \gamma_j, \ovph_j]$$
and
$$t(e_n)=s(e_{n+1}).$$
(This last condition ensures that the product on the right in (\ref{E:MorXcompo}) is not $0$ but   an element of $\Mor(\mbx)_0$ instead.)
The significance of (\ref{E:MorXcompo}) is that the value of the right hand side as an element of $\Mor(\mbx)_0$ is independent of the specific choices of the $e_j$ used in representing the elements of $\Mor(\mbx)$ being composed on the left. The definition (\ref{E:MorXcompo}) makes it clear that if $f, g\in\Mor(\mbx)$ for which $t(f)=s(g)$ then $g\circ f$ is defined an
\begin{equation}
s(g\circ f)=s(f)\qquad\hbox{and}\qquad t(g\circ f)=t(g).
\end{equation}
Of course, we define composition with identity morphisms by
\begin{equation}
f\circ 1_x=f \qquad\hbox{and}\qquad 1_y\circ f=f
\end{equation}
if $x=s(f)$ and $y=t(f)$.
Associativity of the composition law on $\Mor(\mbx)$ follows from associativity of the concatenation process.

  We have thus constructed the category $\mbx$.
  
  \subsection{Projection to $\mbbb$} There is a well-defined projection functor 
  $$\pi: \mbx\to \mbbb$$
  given by
  \begin{equation}\label{E:XBproj}
  \begin{split}
  [i,I,u,\ovg] &\mapsto u\\
  \bigl[i_1, I_1, \gamma_{i_1},\phi_{i_1}]\ldots \bigl[i_n, I_n, \gamma_{i_n},\phi_{i_n}\bigr] &\mapsto \gamma_{i_n}\circ\ldots\gamma_{i_1}
  \end{split}
  \end{equation}
  That this is well-defined on morphisms follows from the first relation in (\ref{E:equivmor}).
  
   \subsection{Local triviality}   It is clear that at the object-level we have  traditional local triviality of the principal $\ovG$-bundle $X\to B$. At the level of morphisms any morphism of $\mbx$ that projects to a morphism $\gamma$ that lies entirely inside $U_I$ arises from some
  $$(i,I, \gamma, \phi)\in \Mor\bigl(\{(i,I)\}\times \mbu_I\times {\ovl{\mbg}}\bigr).$$
  Thus the category $\mbx_I$, with objects in $\pi^{-1}(U_I)$ and morphism projecting to $\mbu_I$, is isomorphic to $\mbu_I\times\ovGb$:
 \begin{equation}\label{E:XBIproj}
\Phi_{i,I}:\mbu_I\times\ovGb\to \mbx_I: \begin{cases} (u,\ovg)&\mapsto [i,I,u, \ovg];\\
(\gamma,\ovph) &\mapsto [i, I, \gamma, \ovph].
\end{cases}
  \end{equation}
  \subsection{Action of the group $\ovGb$} There is a well-defined right action
  \begin{equation}
  \mbx\times\ovGb\to \mbx: \begin{cases}\bigl( [i,I,u], \ovg\bigr) &\mapsto  [i, I,u\ovg];\\
  \bigl([i,I,\gamma,{\overline\phi}], {\overline\psi}\bigr) &\mapsto [i, I, \gamma, {\overline\phi}{\overline\psi}].\end{cases}
  \end{equation}
  That the action is well-defined on morphisms may be seen from
  \begin{equation}
  ({\overline\phi}_2\circ{\overline\phi}_1){\overline\psi}= ({\overline\phi}_2{\overline\psi})\circ ({\overline\phi}_1{\overline\psi}).
  \end{equation}
  Using local triviality it follows that this action is free. 
  
We have thus shown that 
$$\mbx\to\mbbb$$
 is a categorical principal $\ovGb$-bundle. Let us recall that we assumed in (\ref{E:tauH}) that $\tau(H)=G$. Dropping this assumption means simply that $\mbx\to\mbbb$ is a categorical principal $\ovGb_{\tau}$-bundle.

        {\bf{ Acknowledgments.} }   Sengupta   acknowledges    research support from   NSA grant H98230-13-1-0210,  and    the SN Bose National Centre for its hospitality during visits when part of this work was done.

\end{document}